\newtheorem{thm}{Theorem}
\newtheorem{theorem}{Theorem}[section]
\newtheorem{lemma}[theorem]{Lemma}
\begin{document}
	
\title[Tidy Groups]{Finite solvable tidy Groups whose orders are divisible by two primes}

\author[Beike]{Nicolas F. \ Beike}
\address{
	Department of Mathematical Sciences, Kent State University, Kent, OH 44242}
\email{nbeike@kent.edu}

\author[Carleton]{Rachel Carleton}
\address{Department of Mathematical Sciences, Kent State University, Kent, OH 44242}
\email{rcarlet3@kent.edu}

\author[Costanzo]{David G.\ Costanzo}
\address{School of Mathematical and Statistical Sciences, O-110 Martin Hall, Box 340975, Clemson University, Clemson, SC 29634}
\email{davidgcostanzo@gmail.com}

\author[Heath]{Colin Heath}
\address{
    New York University School of Law, 40 Washington Square South, New York, NY 10012}
\email{colin.heath@law.nyu.edu}

\author[Lewis]{Mark L.\ Lewis}
\address{Department of Mathematical Sciences, Kent State University, Kent, OH 44242}
\email{lewis@math.kent.edu}

\author[Lu]{Kaiwen Lu}
\address{
    Department of Mathematics, Brown University, Providence, RI 02912}
\email{kaiwen$\_$lu@brown.edu}

\author[Pearce]{Jamie D. Pearce}
\address{Department of Mathematics, University of Texas at Austin, 2515 Speedway, PMA 8.100, Austin, TX 78712}
\email{jamie.pearce@utexas.edu}

\keywords{tidy groups, $\{p, q\}$-groups, solvable groups}
\subjclass[2010]{Primary: 20D10 Secondary: 20D20 }

\begin{abstract}
In this paper, we investigate finite solvable tidy groups.  
We classify the tidy $\{ p, q \}$-groups.  Combining this with a previous result, we are able to characterize the finite tidy solvable groups.  Using this characterization, we bound the Fitting height of finite tidy solvable groups and we prove that the quotients of finite tidy solvable groups are tidy.   	
\end{abstract}

\maketitle


\section{Introduction}

Throughout this paper, all groups are finite except where stated.  We use \cite{Isagrp} for standard group theory results; but any standard group theory text will contain nearly all of the results that we need. For a group $G$ and an element $x \in G$, we condier the set ${\rm Cyc}_G (x) = \{ g \in G \mid \langle x, g \rangle {\rm~is~cyclic} \}$.  For most groups $G$ and elements $x$, this set ${\rm Cyc}_G (x)$ is not a subgroup. In this paper, we focus on the following special situation.  As in \cite{cycels}, a group $G$ is said to be {\it tidy} if ${\rm Cyc}_G (x)$ is a subgroup of $G$ for every element $x \in G$.  

Using the definition of tidy, it is an easy exercise to show that subgroups of tidy groups are tidy.  On the other hand, in \cite{cycels}, it was shown that if $G$ is nilpotent and the Sylow subgroups of $G$ are tidy, then $G$ is tidy.  In \cite{ourpre}, we determine that a $p$-group is tidy if and only if it is cyclic, has exponent $p$, is dihedral, or is generalized quaternion.  Therefore, we know all of the nilpotent tidy groups.

It is natural next to look at solvable groups.  First, one would ask if there are non-tidy solvable groups where all of the Sylow subgroups are tidy.  In \cite{pre2}, we show that $S_3 \times Z_3$ is an example of a solvable group where all of the Sylow subgroups are tidy but the group itself is not tidy.  However, when we consider Hall subgroups with two prime divisors, we obtain a different answer.  The main result in \cite{pre2} shows that if $G$ is a solvable group and every Hall subgroup whose order is divisible by two prime divisors is tidy, then $G$ is tidy.  

Thus, if we can classify the $\{ p, q \}$-groups that are tidy for primes $p$ and $q$, then we can use 
the main result from \cite{pre2} to characterize the solvable tidy groups.  In fact, we are able to classify the tidy $\{ p, q \}$-groups in Theorem \ref{Lemma 1.17.}.  Due to the complexity of the list of groups, we will leave the presentation of this classification for Section \ref{secntidypq}.  We now present two consequences for solvable tidy groups that we can obtain from our characterization.

First, we are able to bound the Fitting height of the group and the derived length of the quotient modulo the Fitting subgroup.

\begin{thm} \label{Lemma 1.15.} 
Let $G$ be a solvable, tidy group. Then $G$ has Fitting height at most $4$ and $G/F(G)$ has derived length at most $4$. If $|G|$ is odd, then $G$ has Fitting height at most $3$ and $G/F(G)$ is abelian or metabelian.
\end{thm}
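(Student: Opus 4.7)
The plan is to leverage the classification of tidy solvable $\{p,q\}$-groups given in Theorem~\ref{Lemma 1.17.}, together with two facts: subgroups of tidy groups are tidy (immediate from the definition), and quotients of solvable tidy groups are tidy (the companion result stated in the abstract). Together these imply that every $\{p,q\}$-section of $G$ appears on the list supplied by Theorem~\ref{Lemma 1.17.}, so the numerical bounds should be extracted directly from that list by inspection and then assembled globally.

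My main strategy is to analyze $\overline{G}=G/F(G)$. Since $\overline{G}$ is again tidy and solvable, it acts faithfully on $F(G)/\Phi(F(G))$. Fix a prime $p$ dividing $|F(G)/\Phi(F(G))|$ and a prime $q\neq p$ dividing $|\overline{G}|$. Pulling back to $G$, one locates a Hall $\{p,q\}$-subgroup $H$ that contains the $p$-part of $F(G)/\Phi(F(G))$ on which a Sylow $q$-subgroup of $\overline{G}$ acts. Because $H$ is tidy, its structure is dictated by Theorem~\ref{Lemma 1.17.}: the list yields an explicit bound on the derived length of $H/F(H)$ and on its Fitting height. Varying $(p,q)$ over all admissible pairs and iterating up the Fitting series should yield $h(G)\leq 4$ and the derived-length bound on $\overline{G}$. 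Equivalently, one can argue by induction on $|G|$ with $\overline{G}$ as the inductive object, taking care that the ``top'' Fitting step can always be absorbed into the bounds read off from the classification, so that the induction closes rather than drifting upward.

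The odd-order case should be considerably easier, since the earlier paper in this series shows that the only tidy $p$-groups of odd order are cyclic or elementary abelian. This prunes the list of tidy $\{p,q\}$-groups of odd order substantially, leaving only very restricted extensions in which $H/F(H)$ is abelian or metabelian; propagating this forces $h(G)\leq 3$ and $G/F(G)$ metabelian.

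The principal obstacle in both parts is the global assembly step: the tidy hypothesis is \emph{local}, visible only on Hall $\{p,q\}$-subgroups, so one must verify that local bounds on the Fitting height of $H$ actually force a global bound on $h(G)$. I would handle this by showing that each layer $F_{i+1}(G)/F_i(G)$ of the Fitting series of $G$ contributes nontrivially to some tidy Hall $\{p,q\}$-subgroup (choosing $p$ so that $F_{i+1}(G)/F_i(G)$ has a $p$-part and $q$ so that $F_i(G)/F_{i-1}(G)$ has a $q$-part acted on nontrivially), and then invoking Theorem~\ref{Lemma 1.17.} to cap the number of such layers at four. This case analysis, driven by the explicit classification, is where most of the work will lie.
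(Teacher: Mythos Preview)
Your local-to-global assembly step has a genuine gap, and it is the crux of the matter. Knowing that every Hall $\{p,q\}$-subgroup of $G$ has Fitting height at most $3$ (which is what Theorem~\ref{Lemma 1.17.} gives) does \emph{not} by itself bound the Fitting height of $G$: in a solvable group one can have a long Fitting series $1=F_0<F_1<\cdots<F_h=G$ in which each successive pair $F_{i+1}/F_{i-1}$ involves a different set of primes, so no single Hall $\{p,q\}$-subgroup witnesses more than two or three layers. Your sketch chooses a new pair $(p,q)$ for each adjacent pair of layers and then ``invok[es] Theorem~\ref{Lemma 1.17.} to cap the number of such layers at four,'' but the theorem says nothing about how many such pairs can be chained together. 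The inductive version you float has the same defect: applying the bound to $\overline G=G/F(G)$ yields $h(\overline G)\le 4$, hence only $h(G)\le 5$, and the induction drifts rather than closes.

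The paper avoids this entirely by working with a \emph{global} quotient rather than Hall subgroups. For each prime $p_i$ dividing $|F(G)|$ it takes the full Hall $p_i$-complement $H_i$ of $G$, sets $C_i=C_{H_i}(O_{p_i}(G))$, and uses Theorem~\ref{Lemma 1.14.} (not Theorem~\ref{Lemma 1.17.}) to classify $G/C_i$. This is information about the whole group: each $G/(O_{p_i}(G)C_i)$ is a $p_i'$-quotient that is either trivial, a Frobenius complement, a subgroup of $S_4$, or the $S_3$ of case~(5), so it has Fitting height at most $3$ and derived length at most $4$ (at most $2$ when $|G|$ is odd). The decisive algebraic step is then to prove that $\bigcap_i O_{p_i}(G)C_i \le F(G)$: decomposing an element of the intersection into its $p_i$-parts and its $\{p_1,\dots,p_n\}'$-part forces the latter to centralise $F(G)$ and hence vanish. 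Thus $G/F(G)$ embeds in $\prod_i G/(O_{p_i}(G)C_i)$, and the bounds follow immediately. Your proposal never reaches an analogue of this intersection argument, and that is the missing idea.
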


We note that the definition of tidy groups is not particularly compatible with quotients. It is noted by Erfanian and Farrokhi in the Introduction of \cite{classes} that the quotients of infinite tidy groups are not necessarily tidy.  They give an example of an infinite tidy group with a quotient that is not tidy.

We now give some evidence that for finite tidy groups, quotients of tidy groups are tidy.  In particular, we prove that the quotients of finite solvable tidy groups are tidy.

\begin{thm} \label{quotient}
If $G$ is a (finite) solvable tidy group and $N$ is a normal subgroup of $G$, then $G/N$ is tidy.
\end{thm}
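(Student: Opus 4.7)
The plan is to combine the classification of tidy $\{p,q\}$-groups (Theorem \ref{Lemma 1.17.}) with the main result of \cite{pre2} — that a solvable group is tidy if and only if every Hall subgroup of order divisible by exactly two primes is tidy — to reduce the theorem to a statement purely about quotients of tidy $\{p,q\}$-groups.

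Since $G/N$ is solvable, \cite{pre2} tells us that it suffices to show every Hall $\{p,q\}$-subgroup of $G/N$ is tidy, for every pair of primes $p,q$ dividing $|G/N|$. Let $K/N$ be such a Hall subgroup, set $\pi=\{p,q\}$, and let $L$ be a Hall $\pi$-subgroup of $K$. Since $K/N$ is a $\pi$-group we have $|L|=|K|_\pi=|K/N|\cdot|N|_\pi$, while $L\cap N$ is a $\pi$-subgroup of $N$, so $|L\cap N|\leq|N|_\pi$. The chain
\[
|K|=|K/N|\cdot|N|\leq|L|\cdot|N|/|L\cap N|=|LN|\leq|K|
\]
forces $LN=K$, and therefore $K/N\cong L/(L\cap N)$. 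As $L$ is a subgroup of the tidy group $G$, $L$ is itself a tidy $\{p,q\}$-group, and $L\cap N$ is normal in $L$.

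The theorem thus reduces to the following claim: \emph{every quotient of a tidy $\{p,q\}$-group by a normal subgroup is again tidy.} To establish this, I would proceed family by family through the classification in Theorem \ref{Lemma 1.17.}: for each entry in that list, enumerate the proper normal subgroups and identify the corresponding quotient. Quotients that become $p$-groups (or $q$-groups) are checked against the classification of tidy $p$-groups from \cite{ourpre}, namely the cyclic, exponent-$p$, dihedral, and generalized quaternion groups. Quotients that remain $\{p,q\}$-groups must be matched to another family of Theorem \ref{Lemma 1.17.}; and the remaining possibilities are nilpotent tidy groups, which are governed by \cite{cycels}.

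The main obstacle is this final case analysis. The list of tidy $\{p,q\}$-groups is intricate enough that the authors defer it to Section \ref{secntidypq}, so I do not expect a single uniform argument to dispatch all families at once. I anticipate the entries with Frobenius-type structure or with nontrivial semidirect-product actions to require the most care, since they admit the richest collection of normal subgroups and the quotients can jump between different families of the classification.
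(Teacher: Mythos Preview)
Your approach is correct and matches the paper's: the paper likewise reduces to showing that quotients of tidy $\{p,q\}$-groups are tidy (packaged as Lemma \ref{pq-quotients}), then applies the main result of \cite{pre2} to conclude. Your reduction via $K/N\cong L/(L\cap N)$ is a slight variant of the paper's direct observation that if $H$ is a Hall $\rho$-subgroup of $G$ then $HN/N$ is a Hall $\rho$-subgroup of $G/N$, but the content is the same; the case analysis you anticipate is indeed what the paper carries out in the proof of Lemma \ref{pq-quotients}, handling the Frobenius-type family (2) via Theorem \ref{Lemma 1.26.} and families (3)--(5) by observing their non-$p$-group quotients are $S_3$, $A_4$, or again of the form in Theorem \ref{Lemma 1.27.}.
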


As the above shows, one of our methods of analyzing tidy groups is in terms of their Hall subgroups.  In order to obtain many of our results, we also need a second method of analysis.  This method involves studying the elements of prime power order.  We will show in Section \ref{secnprimepower} that it is sufficient to check whether the elements of prime power order satisfy the condition of the definition to determine if a group is tidy.  



This research was conducted during a summer REU in 2020 at Kent State University with the funding of NSF Grant DMS-1653002.  We thank the NSF and Professor Soprunova for their support.   

\section{Elements of Prime Power Order} \label{secnprimepower}

In \cite{pre2}, we prove the following result which shows that need only check on the elements of prime power order.  

\begin{lemma}\label{Lemma 1.22.}
Suppose $G$ is a group. If every element $1 \ne x \in G$ having prime power order satisfies the condition that ${\rm Cyc}_G (x)$ is a subgroup of $G$, then $G$ is a tidy group.  
\end{lemma}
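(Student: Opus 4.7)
The plan is to reduce the general case to the hypothesis by decomposing an arbitrary element into its commuting prime-power parts. Given $y \in G$, I would handle the degenerate cases first ($y = 1$ gives ${\rm Cyc}_G(y) = G$, and if $y$ has prime power order the conclusion is immediate from the hypothesis), and then assume $y$ has order divisible by at least two primes. Writing the prime factorization $|y| = p_1^{a_1} \cdots p_k^{a_k}$, each prime-power part $y_i$ (the $p_i$-part of $y$) is a power of $y$, is nontrivial, has prime power order, and the $y_i$ pairwise commute with $y_1 y_2 \cdots y_k = y$.

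The main step is to establish the identity
$$
{\rm Cyc}_G(y) \;=\; \bigcap_{i=1}^{k} {\rm Cyc}_G(y_i).
$$
Once this is proved, the hypothesis gives that each ${\rm Cyc}_G(y_i)$ is a subgroup of $G$, so the intersection is a subgroup, and the proof is finished.

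For the forward containment, if $g \in {\rm Cyc}_G(y)$ then $\langle y, g \rangle$ is cyclic; since each $y_i \in \langle y \rangle$, the subgroup $\langle y_i, g \rangle$ sits inside a cyclic group and is therefore cyclic, so $g \in {\rm Cyc}_G(y_i)$ for every $i$. For the reverse containment, assume $g \in {\rm Cyc}_G(y_i)$ for every $i$. Then $g$ commutes with each $y_i$ and hence with $y$, so $H := \langle y, g \rangle$ is abelian. I would then argue via the primary decomposition of the finite abelian group $H$: for each prime $p$, the Sylow $p$-subgroup of $H$ equals $\langle y_p, g_p \rangle$, where $y_p$ and $g_p$ denote the $p$-parts. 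For $p \notin \{p_1, \ldots, p_k\}$ we have $y_p = 1$, so the Sylow $p$-subgroup is simply $\langle g_p \rangle$, which is cyclic. For $p = p_i$, the Sylow $p_i$-subgroup is $\langle y_i, g_{p_i} \rangle$, and this is exactly the Sylow $p_i$-subgroup of the cyclic group $\langle y_i, g \rangle$, hence cyclic. With every Sylow subgroup cyclic, the abelian group $H$ is cyclic, placing $g \in {\rm Cyc}_G(y)$.

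The only genuinely delicate part is the reverse containment, where one must verify that the cyclicity of each $\langle y_i, g \rangle$ assembles into cyclicity of $\langle y, g \rangle$; I expect the cleanest route to be the primary-decomposition argument just sketched, which localizes the cyclicity condition at each prime and then reassembles via the structure theorem for finite abelian groups. Everything else is bookkeeping with prime-power parts.
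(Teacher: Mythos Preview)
Your argument is correct. The forward containment is immediate, and your reverse containment via the primary decomposition of the abelian group $H=\langle y,g\rangle$ is sound: the Sylow $p_i$-subgroup of $H$ is indeed $\langle y_i,g_{p_i}\rangle$, which coincides with the Sylow $p_i$-subgroup of the cyclic group $\langle y_i,g\rangle$, and the remaining Sylow subgroups are visibly cyclic. There is nothing to compare against in this paper, however, since the lemma is not proved here but is quoted from \cite{pre2}; the paper merely states the result and cites that reference.
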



With this in mind, we can focus on the elements of prime power order.  Since subgroups of tidy groups are tidy, we need to focus on tidy Sylow $p$-subgroups.   In Theorem 14 of \cite{cycels}, they obtain a characterization of the tidy $p$-groups.  In \cite{ourpre}, we use their characterization to obtain a classification of the tidy $p$-groups.

\begin{theorem}\label{p-groups}
Let $G$ be a $p$-group for some prime $p$.   Then $G$ is a tidy group if and only if one of the following occurs:.
\begin{enumerate}
\item $G$ has exponent $p$.
\item $G$ is cyclic.
\item $p = 2$ and $G$ is dihedral or generalized quaternion.	
\end{enumerate}
\end{theorem}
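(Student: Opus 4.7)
The plan is to prove each direction of the biconditional separately.

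\emph{Sufficiency.} I would verify each of the three listed types is tidy by direct inspection. If $G$ has exponent $p$, then for any nontrivial $x$, every cyclic subgroup of $G$ containing $x$ has order $p$ and must equal $\langle x \rangle$, so $\cyc{G}{x} = \langle x \rangle$. If $G$ is cyclic, $\cyc{G}{x} = G$ trivially. For $G = D_{2^n}$, I would case-split on whether $x$ lies in the index-$2$ cyclic subgroup, showing that $\cyc{G}{x}$ equals either that subgroup or $\langle x \rangle$. For $G = Q_{2^n}$, I would exploit the fact that $Q_{2^n}$ has a unique involution (which is central) and every nontrivial cyclic subgroup contains it; a short case analysis gives tidiness.

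\emph{Necessity.} Let $G$ be a tidy $p$-group that is neither cyclic nor of exponent $p$, so $G$ contains an element $z$ of order $p^2$. The key lemma is that every element $a$ of order $p$ in $\centralizer{G}{z}$ must lie in $\langle z \rangle$. Indeed, if $a$ commutes with $z$ and $a \notin \langle z \rangle$, then $\langle z, a \rangle \cong C_{p^2} \times C_p$ and $(za)^p = z^p$ places both $z$ and $za$ in $\cyc{G}{z^p}$; but $(za) z^{-1} = a$ is not in $\cyc{G}{z^p}$, because $\langle z^p, a \rangle \cong C_p \times C_p$ is not cyclic, contradicting the tidiness of $G$. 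In particular, $\Omega_1(\centerof{G}) \subseteq \langle z^p \rangle$, so $\centerof{G}$ is cyclic with unique order-$p$ subgroup $\langle z^p \rangle$, and every element of $G$ of order $p^2$ has its $p$-th power in $\langle z^p \rangle$.

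The argument then bifurcates. If $G$ has a unique subgroup of order $p$, the classical theorem on $p$-groups with this property (see \cite{Isagrp}) forces $p = 2$ and $G$ to be cyclic or generalized quaternion; since $G$ is not cyclic, $G = Q_{2^n}$. Otherwise $G$ contains an element $a$ of order $p$ outside $\langle z^p \rangle$, and the key lemma says $a$ does not commute with any element of order $p^2$. Combining this constraint with the existence of $z$, a detailed analysis of involutions and their centralizers forces $p = 2$ and $G = D_{2^n}$.

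\emph{Main obstacle.} The principal difficulty lies in the last bifurcation, namely ruling out the $2$-groups that superficially resemble $D_{2^n}$ but are not dihedral, notably the semidihedral groups $SD_{2^n}$ and the modular maximal-cyclic groups $M_{2^n}$ for $n \geq 4$. Each has both an element of order $2^{n-1}$ and multiple involutions, yet its nontrivial commutation relation causes the key lemma to fail for a specific witnessing pair. Verifying the exclusion requires an explicit computation producing elements $w, w'$ with $w, w' \in \cyc{G}{v}$ but $ww' \notin \cyc{G}{v}$ for a carefully chosen $v$, leveraging the defining relations of these groups.
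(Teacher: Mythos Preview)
The paper does not prove this theorem; it is quoted from the companion paper \cite{ourpre}, which (judging from its title) derives the classification from Theorem~14 of \cite{cycels} via the Hughes subgroup. So there is no in-paper proof to compare against, but your argument can be assessed on its own.

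Your sufficiency direction and your key lemma are correct, and in fact you are one observation away from the clean structural step. Since $z^p$ is central, tidiness makes $K:=\mathrm{Cyc}_G(z^p)$ a normal subgroup; your key lemma shows that $K$ contains every element of order at least $p^2$ while its only elements of order $p$ lie in $\langle z^p\rangle$. Thus $K$ has a unique subgroup of order $p$, so $K$ is cyclic or generalized quaternion by the classical theorem you already cite. This is essentially the Hughes-subgroup route, and you have the ingredients but have not assembled them.

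As written, your Case~2 is a genuine gap. You assert that ``a detailed analysis forces $p=2$ and $G=D_{2^n}$'' without giving it, and your ``Main obstacle'' paragraph proposes only to exclude $SD_{2^n}$ and $M_{2^n}$ by ad hoc computation. That is neither sufficient nor the right shape: you have not yet established $p=2$, and even after that you have not reduced $G$ to the four classical $2$-groups with a cyclic maximal subgroup, so naming two of them does not finish. The missing idea is to argue globally via $K$ rather than element-by-element. Once $K\lhd G$ is cyclic or generalized quaternion and every element of $G\setminus K$ has order $p$, a short calculation with a generator $c$ of cyclic $K$ and $a\notin K$ gives $(ca)^p=c^p$ for odd $p$, so $ca\notin K$ has order at least $p^2$, a contradiction; hence $p=2$. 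For $p=2$ one then checks which involutory automorphisms of $K$ produce only involutions in the nontrivial coset, and only inversion survives, yielding $|G:K|=2$ and $G$ dihedral (with a separate short argument when $K$ is quaternion). Without this, your Case~2 is a plan rather than a proof.
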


The following is a consequence of  Burnside's normal $p$-complement theorem (see Theorem 5.13 of \cite{Isagrp}) and Fitting's theorem (see Theorem 4.34 of \cite{Isagrp}).  It is proved as Lemma 2.5 of \cite{pre2}.

\begin{lemma} \label{tidy cyclic 1}
Let $G$ be a group and let $p$ be a prime.  If $G$ has a cyclic Sylow $p$-subgroup and $p$ divides $|Z(G)|$, then $G$ has a normal $p$-complement.
\end{lemma}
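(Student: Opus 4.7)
The plan is to verify the hypothesis of Burnside's normal $p$-complement theorem, namely that $P \subseteq \centerof{\normalizer{G}{P}}$ for a Sylow $p$-subgroup $P$ of $G$, from which the conclusion follows directly from Theorem~5.13 of \cite{Isagrp}. Equivalently, I must show that $\normalizer{G}{P} = \centralizer{G}{P}$.

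Since $P$ is abelian (being cyclic), $P$ sits inside $\centralizer{G}{P}$ and is the Sylow $p$-subgroup of $\normalizer{G}{P}$. Hence the quotient $\normalizer{G}{P}/\centralizer{G}{P}$, which embeds in $\aut{P}$, has order coprime to $p$. When $p = 2$, this already suffices: $\aut{P}$ is itself a $2$-group for a cyclic $2$-group $P$, so the quotient is trivial. The hypothesis on $\centerof{G}$ plays no role in this case, recovering the classical fact that groups with cyclic Sylow $2$-subgroups have a normal $2$-complement.

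For odd $p$, the central element is essential. Since $p$ divides $|\centerof{G}|$, the Sylow $p$-subgroup of $\centerof{G}$ is nontrivial; it is normal in $G$ and consists of central $p$-elements, hence it lies in $P$. Fix $z \in \centerof{G} \cap P$ of order $p$; by cyclicity, $\langle z \rangle$ is the unique subgroup of $P$ of order $p$. The key observation is that for a cyclic $p$-group with $p$ odd, the restriction homomorphism $\aut{P} \to \aut{\langle z \rangle}$ has kernel of $p$-power order, so a $p'$-automorphism of $P$ is determined by its effect on $\langle z \rangle$. Every element of $\normalizer{G}{P}$ centralizes $z$, and $\normalizer{G}{P}/\centralizer{G}{P}$ has $p'$-order, so the latter quotient maps trivially into $\aut{\langle z \rangle}$ and is therefore itself trivial.

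The only delicate step is the restriction-to-$\langle z \rangle$ argument for odd $p$, which is a standard coprime-action fact and is likely the role of the Fitting reference in the statement. With that in hand, both the even and odd cases reduce to direct applications of Burnside's theorem, yielding the required normal $p$-complement.
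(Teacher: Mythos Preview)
Your proof is correct and follows essentially the route the paper indicates: reduce to Burnside's transfer theorem by showing $\normalizer{G}{P}=\centralizer{G}{P}$, using a coprime-action argument on the cyclic group $P$. The paper does not spell out a proof here (it cites Lemma~2.5 of \cite{pre2} and names Burnside together with Fitting's theorem), but your restriction-to-$\langle z\rangle$ computation is exactly the content of Fitting's lemma specialized to a cyclic $p$-group: a $p'$-group acting on $P$ with a nontrivial fixed point must act trivially, since $P=[P,A]\times C_P(A)$ forces $[P,A]=1$ when $C_P(A)\ne 1$ and $P$ is cyclic. Your explicit description of the kernel of $\aut{P}\to\aut{\langle z\rangle}$ as the Sylow $p$-subgroup of $\aut{P}$ for odd $p$ is a fine substitute for invoking Fitting abstractly, and your separate handling of $p=2$ (where $\aut{P}$ is already a $2$-group) is correct and even shows the hypothesis on $\centerof{G}$ is superfluous in that case.
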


When a Sylow $p$-subgroup of $G$ is cyclic or generalized quaternion and $x \in Z(G)$ has $p$-power order, then ${\rm Cyc}_G (x)$ is a subgroup.  This was proved in Lemma 2.6 of \cite{pre2}.

\begin{lemma}\label{Lemma 1.19.}
Let $G$ be a group and let $p$ be a prime.  Suppose $1 \ne x \in Z(G)$ has $p$-power order. If a Sylow $p$-subgroup of $G$ is either cyclic or generalized quaternion, then $G = {\rm Cyc}_G (x)$.
\end{lemma}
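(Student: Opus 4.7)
The plan is to show that every $g \in G$ satisfies $\langle x, g \rangle$ cyclic. Because $x \in Z(G)$, the subgroup $\langle x, g \rangle$ is automatically abelian, so the real task is to show it is generated by a single element. I would begin by decomposing $g = g_p g_{p'}$ into its commuting $p$-part and $p'$-part. Since $\langle g_{p'} \rangle$ has order coprime to $|\langle x, g_p \rangle|$ and commutes with both $x$ and $g_p$, we get an internal direct product
\[
\langle x, g \rangle \;=\; \langle x, g_p \rangle \times \langle g_{p'} \rangle.
\]
A direct product of two cyclic groups of coprime orders is cyclic, so it suffices to prove that $\langle x, g_p \rangle$ is cyclic.

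Next I would reduce everything into a single Sylow $p$-subgroup. Since $x \in Z(G)$ has $p$-power order, $\langle x \rangle$ is a normal $p$-subgroup of $G$ and is therefore contained in every Sylow $p$-subgroup. Choose a Sylow $p$-subgroup $P$ containing $g_p$; then $\langle x, g_p \rangle \leq P$. If $P$ is cyclic, every subgroup of $P$ is cyclic, and we are done immediately.

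The substantive case, and the main (though minor) obstacle, is when $P$ is generalized quaternion, so $p = 2$. Here $P$ has a unique involution $z$ which generates $Z(P)$, and since $x \in Z(G) \cap P \subseteq Z(P)$ has $2$-power order with $x \neq 1$, the only possibility is $x = z$. I would then invoke the standard fact that in a generalized quaternion group $z$ is the unique element of order $2$, so $z$ lies in $\langle y \rangle$ for every nonidentity $2$-element $y \in P$. Thus either $g_p = 1$, giving $\langle x, g_p \rangle = \langle z \rangle$, or $z = x \in \langle g_p \rangle$, giving $\langle x, g_p \rangle = \langle g_p \rangle$. In either subcase $\langle x, g_p \rangle$ is cyclic, completing the argument.
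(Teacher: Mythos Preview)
Your proof is correct. The decomposition $g = g_p g_{p'}$, the reduction to the $p$-part via the coprime direct product, and the handling of the generalized quaternion case via the unique involution are all sound. One very minor point: in step~4 you should perhaps say explicitly that $\langle x, g_p\rangle$ is a $p$-group (it is abelian, generated by commuting $p$-elements), which is what makes the intersection with $\langle g_{p'}\rangle$ trivial; but this is clear from context.

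As for comparison: the paper does not actually prove this lemma. It is quoted as Lemma~2.6 of the companion paper \cite{pre2} and no argument is given here. So there is nothing in the present paper to compare your approach against. Your argument is the natural direct one and is essentially self-contained; the only external fact you use is that a generalized quaternion group has a unique involution, which is standard.
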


For central elements of order $p$ when the Sylow subgroup has exponent $p$, we proved the following in Lemma 2.4 of \cite{pre2}. 

\begin{lemma} \label{tidy exp p 1}
Let $G$ be a group and let $p$ be a prime.  Suppose $x \in Z(G)$ has order $p$ and a Sylow $p$-subgroup of $G$ has exponent $p$.  Then ${\rm Cyc}_G (x)$ is a subgroup of $G$ if and only if $G$ has a normal $p$-complement.  In this case, ${\rm Cyc}_G (x) = \langle x \rangle K$ where $K$ is the normal $p$-complement of $G$.
\end{lemma}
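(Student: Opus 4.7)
The plan is to first obtain a concrete description of ${\rm Cyc}_G(x)$ and then translate the subgroup condition into a structural statement about $G$. For $g \in G$, write $g = g_p g_{p'}$ with $g_p, g_{p'} \in \langle g \rangle$ the $p$- and $p'$-parts. Because $x \in Z(G)$ has order $p$, the group $\langle x, g \rangle$ is abelian and decomposes as $\langle x, g_p \rangle \times \langle g_{p'} \rangle$, a direct product of a $p$-group and a $p'$-group. The $p'$ factor is automatically cyclic, while the hypothesis that a Sylow $p$-subgroup of $G$ has exponent $p$ forces $g_p$ to have order $1$ or $p$; hence $\langle x, g_p \rangle$ is cyclic precisely when $g_p \in \langle x \rangle$. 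This yields the characterization
\[
{\rm Cyc}_G(x) = \{ g \in G : g_p \in \langle x \rangle \}.
\]

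For the forward direction, assume that $G$ has a normal $p$-complement $K$. Then $\langle x \rangle K$ is a subgroup and every $p'$-element of $G$ lies in $K$. An element $x^i k \in \langle x \rangle K$ has $p$-part $x^i \in \langle x \rangle$ (because $x$ is central of order $p$ and $k$ has $p'$-order), so $\langle x \rangle K \subseteq {\rm Cyc}_G(x)$. Conversely, any $g \in {\rm Cyc}_G(x)$ satisfies $g_p \in \langle x \rangle$ and $g_{p'} \in K$, so $g \in \langle x \rangle K$. Hence ${\rm Cyc}_G(x) = \langle x \rangle K$, which is a subgroup.

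For the converse, suppose $H := {\rm Cyc}_G(x)$ is a subgroup. The characterization shows that every $p$-element of $H$ lies in $\langle x \rangle$, so $\langle x \rangle$ is itself a Sylow $p$-subgroup of $H$, and it is central in $H$ because $x \in Z(G) \cap H$. Applying Lemma \ref{tidy cyclic 1} to $H$ produces a normal $p$-complement $K$ of $H$. Since $H$ contains every $p'$-element of $G$, $K$ is exactly the set of all $p'$-elements of $G$; this set is $G$-invariant (conjugation preserves order), so $K \trianglelefteq G$. For any $g \in G$ we have $gK = g_p K$, so every coset of $K$ in $G$ has a $p$-power representative. Thus $|G/K|$ is a power of $p$ and, combined with $\gcd(|K|, p) = 1$, we conclude $|K| = |G|_{p'}$. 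Hence $K$ is a normal $p$-complement of $G$, and the formula ${\rm Cyc}_G(x) = \langle x \rangle K$ follows from what was already proved in the forward direction.

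The main technical hurdle is the converse: having obtained a normal $p$-complement of the subgroup $H$ from Lemma \ref{tidy cyclic 1}, one must promote it to a normal $p$-complement of all of $G$. This promotion rests entirely on the absorption property that ${\rm Cyc}_G(x)$ contains every $p'$-element of $G$, which is immediate from the first-step characterization and forces $K$ to be both $G$-invariant and of the correct index.
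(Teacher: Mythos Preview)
Your proof is correct. The paper itself does not prove this lemma; it simply cites it as Lemma~2.4 of \cite{pre2}, so there is no in-paper argument to compare against. Your approach---first deriving the explicit description ${\rm Cyc}_G(x)=\{g\in G: g_p\in\langle x\rangle\}$ from the exponent-$p$ hypothesis, then using Lemma~\ref{tidy cyclic 1} (Burnside) on $H={\rm Cyc}_G(x)$ to extract a normal $p$-complement and promote it to $G$ via the observation that $H$ already contains every $p'$-element---is clean and self-contained. One minor remark: in the paper's source \cite{pre2} the numbering suggests that this lemma (2.4 there) precedes Lemma~\ref{tidy cyclic 1} (2.5 there), so the original argument presumably invokes Burnside's normal $p$-complement theorem directly rather than through Lemma~\ref{tidy cyclic 1}; but since Lemma~\ref{tidy cyclic 1} is itself just a corollary of Burnside, this is a distinction without a difference, and within the present paper's ordering your citation is entirely legitimate.
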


For a central element of order $2$ in a group whose Sylow $2$-subgroups are dihedral and there is a normal $2$-complement, we proved the following as Lemma 2.7 of \cite{pre2}.  It  made use of a result proved by Gorenstein and Walter in \cite{GoWa} regarding groups that have a dihedral subgroup as a Sylow $2$-subgroup.

\begin{lemma}\label{Lemma 1.20.}
Let $G$ be a group and let $x \in Z(G)$ have order $2$. Suppose a Sylow $2$-subgroup $T$ of G is dihedral and write $D$ for the cyclic subgroup of index $2$ in $T$. Then $G$ has a normal $2$-complement $K$ and ${\rm Cyc}_G (x) = DK$. In particular, ${\rm Cyc}_G (x)$ is a subgroup of G.
\end{lemma}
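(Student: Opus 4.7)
The plan is to first establish that $G$ has a normal $2$-complement $K$ by invoking the Gorenstein--Walter classification \cite{GoWa} of finite groups with dihedral Sylow $2$-subgroups, and then to compute $\mathrm{Cyc}_G(x)$ by decomposing each element into its $2$-part and $2'$-part.

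For the first step, Gorenstein--Walter asserts that $G/O(G)$ is isomorphic to one of: a Sylow $2$-subgroup of $G$; the alternating group $A_7$; or a subgroup of $P\Gamma L_2(q)$ containing $\mathrm{PSL}_2(q)$ for some odd prime power $q > 3$. In the last two cases, the center of $G/O(G)$ is trivial. But $|O(G)|$ is odd and $|x|=2$, so the image of $x$ in $G/O(G)$ is a central involution, ruling out these possibilities. Therefore $G/O(G)$ is a $2$-group, and so $K := O(G)$ is the desired normal $2$-complement.

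Next, I would observe that $DK$ is a subgroup of $G$ of index $2$: since $K$ is normal in $G$ and $D \cap K = 1$, we have $|DK| = |D||K| = |G|/2$, hence $DK \trianglelefteq G$. Since $x \in Z(G)$, for any $g \in G$ with $2$-part $g_2$ and $2'$-part $g_{2'}$ one has $\langle x, g\rangle = \langle x, g_2 \rangle \times \langle g_{2'}\rangle$, so $\langle x, g \rangle$ is cyclic if and only if the two-generator abelian $2$-group $\langle x, g_2 \rangle$ is cyclic, which (using $|x|=2$) is equivalent to $g_2 = 1$ or $x \in \langle g_2 \rangle$.

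To conclude, I would prove both inclusions of $\mathrm{Cyc}_G(x) = DK$. For $\mathrm{Cyc}_G(x) \subseteq DK$: given $g \in \mathrm{Cyc}_G(x)$ with $g_2 \neq 1$ lying in some Sylow $2$-subgroup $T^h$, the element $x$ is the unique central involution of the dihedral group $T^h$, while every element of $T^h \setminus D^h$ is a noncentral involution whose cyclic subgroup excludes $x$; thus $g_2 \in D^h$, and normality of $DK$ gives $D^h \subseteq DK$, so $g = g_2 g_{2'} \in DK$. Conversely, for $g \in DK$ the $2$-part $g_2$ lies in some Sylow $2$-subgroup $D^y$ of $DK$ with $y \in DK$, and centrality of $x \in D$ gives $x \in D^y$, whence $\langle x, g_2\rangle \subseteq D^y$ is cyclic. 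The principal obstacle is the first step, which rests on the nontrivial Gorenstein--Walter classification; the rest is a concrete analysis of cyclic subgroups of dihedral $2$-groups.
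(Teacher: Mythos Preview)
Your argument is correct. The decomposition of $\langle x,g\rangle$ into its $2$-part and $2'$-part, together with the observation that $x$ is the unique central involution of every Sylow $2$-subgroup, cleanly identifies ${\rm Cyc}_G(x)$ with the index-$2$ subgroup $DK$.

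The one place where your route diverges from the paper's is the first step. You invoke the full Gorenstein--Walter classification of groups with dihedral Sylow $2$-subgroups (the 1965 trilogy) to conclude that $G/O(G)$ is a $2$-group. The paper instead points to the much lighter Lemma~8 of the 1962 paper \cite{GoWa}: if a group with dihedral Sylow $2$-subgroup equals the centralizer of an involution, then it has a normal $2$-complement. Since $x\in Z(G)$ gives $G=C_G(x)$ directly, this immediately yields the normal $2$-complement without appealing to the deep classification. Your approach is valid but uses a sledgehammer where a tack hammer suffices; the paper's route keeps the lemma self-contained modulo a single elementary result from \cite{GoWa}, whereas yours imports a theorem whose proof is orders of magnitude longer than the statement being proved. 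The computation of ${\rm Cyc}_G(x)=DK$ afterward is essentially the same in spirit either way.
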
 
The following lemma is key to understanding how coprime subgroups interact in a tidy group. 

\begin{lemma} \label{Lemma 1.6.} 
Let $G$ be a tidy group, and let $p$ be a prime. If $O_p(G) > 1$ and $x \in G$ has order not divisible by $p$, then one of the following occurs:
\begin{enumerate}
\item $x$ centralizes $O_p (G)$.
\item $\langle x \rangle O_p (G)/C_{\langle x \rangle} (O_p(G))$ is a Frobenius group with Frobenius kernel $O_p (G) \cong O_p (G) C_{\langle x \rangle} (O_p(G))/C_{\langle x \rangle} (O_p (G))$.
\item $p = 2$, the subgroup $O_2 (G)$ is the quaternion group of order $8$, and $\langle x \rangle O_2(G)/C_{\langle x \rangle} (O_2(G))$ is isomorphic to ${\rm SL}_2 (3)$.
\end{enumerate}
\end{lemma}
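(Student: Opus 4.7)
My plan is to pass to the tidy subgroup $H = \langle x \rangle P$, where $P = O_p(G)$ is a normal Sylow $p$-subgroup of $H$, and set $C = C_{\langle x \rangle}(P)$. The task is to show $x \in C$ (outcome (1)), or that $\langle x \rangle / C$ acts fixed-point-freely on $P \cong PC/C$ (outcome (2)), or that we land in the exceptional $Q_8 \rtimes C_3 \cong \mathrm{SL}_2(3)$ structure (outcome (3)). Since $P$ is tidy (being a subgroup of a tidy group), Theorem \ref{p-groups} restricts $P$ to one of: cyclic, exponent $p$, dihedral, or generalized quaternion. I assume throughout that $x \notin C$.

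The main engine is Lemma \ref{tidy exp p 1}. If $P$ has exponent $p$ and $x$ centralizes some non-trivial $y \in Z(P)$ of order $p$, then $y \in Z(H)$, and Lemma \ref{tidy exp p 1} forces $H$ to have a normal $p$-complement. Since $\langle x \rangle$ is a $p'$-Hall and the normal $p$-complement is the unique $p'$-Hall, the complement equals $\langle x \rangle$, yielding $[P, x] \leq P \cap \langle x \rangle = 1$ and $x \in C$, a contradiction. Hence $C_P(x) \cap Z(P) = 1$. To upgrade this to $C_P(x) = 1$, I would assume $1 \ne a \in C_P(x)$ and consider the tidy subgroup $B = \langle a \rangle Z(P) \langle x \rangle$. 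Its Sylow $p$-subgroup is $\langle a \rangle Z(P)$, which is abelian of exponent $p$ and normal in $B$ because $Z(P)$ is characteristic in $P$ (hence $\langle x \rangle$-invariant) and $a$ is fixed by $x$; furthermore $a$ is central in $B$. Re-applying Lemma \ref{tidy exp p 1} inside $B$ yields a normal $p$-complement $\langle x \rangle$ of $B$, whence $[\langle a \rangle Z(P), x] = 1$, so $x$ centralizes $Z(P) \ne 1$, contradicting $C_P(x) \cap Z(P) = 1$. The same reasoning applied to any $x^k \notin C$ gives $C_P(x^k) = 1$, yielding outcome (2).

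For the other shapes of $P$ I would argue via automorphism-group structure. When $P$ is cyclic, the unique subgroup $\langle z \rangle$ of order $p$ is characteristic and the $p'$-part of $\mathrm{Aut}(P)$ embeds into $\mathrm{Aut}(\langle z \rangle)$; hence if $x$ fixes $z$ then $x$ centralizes $P$, while if $x$ does not fix $z$ then $x$ acts on $P$ with no non-trivial fixed points (since each non-trivial $a \in P$ has $\langle z \rangle \leq \langle a \rangle$), giving outcome (2). When $P$ is dihedral of order at least $8$ or generalized quaternion of order at least $16$, $\mathrm{Aut}(P)$ is a $2$-group, contradicting the non-trivial coprime action of $x$. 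Finally for $P = Q_8$, the odd part of $\mathrm{Aut}(Q_8) \cong S_4$ is $C_3$, so $x$ acts with order $3$; such an automorphism fixes $Z(P) = \{\pm 1\}$ pointwise yet permutes the cosets of $Z(P)$ cyclically, giving $C_P(x) = Z(P)$ and $H/C \cong Q_8 \rtimes C_3 \cong \mathrm{SL}_2(3)$, outcome (3).

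I expect the trickiest point to be the exponent-$p$ subcase when $P$ is non-abelian. There the auxiliary tidy subgroup $B$ must be crafted so that (i) its Sylow $p$-subgroup is abelian of exponent $p$ (so Lemma \ref{tidy exp p 1} applies) and $\langle x \rangle$-invariant, and (ii) the hypothetical fixed point $a$ plays the role of a central element of $B$. Using $\langle a \rangle Z(P)$ rather than something like $\langle a, z \rangle$ for a single $z$ is the key move, since $Z(P)$ is automatically $\langle x \rangle$-invariant as a characteristic subgroup of $P$. Once this auxiliary reduction is in place, the remaining outcomes follow from the short classification of tidy $p$-groups together with standard facts about their automorphism groups.
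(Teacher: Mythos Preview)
Your proof is correct and follows essentially the same approach as the paper. Both arguments reduce to $H=\langle x\rangle O_p(G)$, invoke Theorem~\ref{p-groups} to restrict the shape of $O_p(G)$, dispose of the dihedral, large-quaternion, and $Q_8$ cases via the structure of $\mathrm{Aut}(O_p(G))$, and then use Lemma~\ref{tidy exp p 1} (together with Lemma~\ref{tidy cyclic 1}) to produce normal $p$-complements and force commutativity. The only cosmetic differences are that the paper treats the odd-cyclic and exponent-$p$ cases uniformly by working in the centralizers $C_G(z)$ and $C_G(y)$ (where the coprime normal subgroups $O_p(G)$ and the $p$-complement must commute), whereas you argue the cyclic case directly from $\mathrm{Aut}(P)$ and, for exponent $p$, build the auxiliary subgroup $B=\langle a\rangle Z(P)\langle x\rangle$; your $B$ is just a carefully chosen subgroup of $C_H(a)$, so the two arguments are really the same idea in slightly different packaging.
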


\begin{proof} 
Without loss of generality, we may assume that $G = \langle x \rangle O_p(G)$.  If $x$ centralizes $O_p (G)$, then we are done. Thus, we may assume that $x$ does not centralize $O_p (G)$.  We know via Theorem \ref{p-groups} that a Sylow $p$-subgroup is either cyclic, dihedral, generalized quaternion, or has exponent $p$.  It is not difficult to see that this implies that $O_p (G)$ must also be in this same list.  If $O_p (G)$ is dihedral, generalized quaternion but not quaternion, or cyclic of $2$-power order, then we know that ${\rm Aut} (O_p (G))$ is a $2$-group and we conclude that $x$ must centralize $O_p (G)$.  Suppose that $O_2 (G)$ is the quaternion group of order $8$.  In this case, we know that ${\rm Aut} (O_2 (G))$ is isomorphic to $S_4$. This implies that $x C_{\langle x \rangle} (O_2(G))$ has order $3$ and $\langle x \rangle O_2(G)/C_{\langle x \rangle} (O_2 (G))$ is isomorphic to ${\rm SL}_2 (3)$. 

We now may assume that either $p$ is odd or $O_p (G)$ is elementary abelian if $p = 2$. Hence, either $O_p (G)$ is cyclic or has exponent $p$. Suppose that there exists $1 \ne z \in Z(O_p (G))$ with $C_{\langle x \rangle} (z) > C_{\langle x \rangle} (O_p(G))$.  By Lemmas \ref{tidy exp p 1} or \ref{tidy cyclic 1}, $C_G (z)$ has a normal $p$-complement. Observe that $O_p (G)$ is contained in $C_G (z)$. Now, $C_{\langle x \rangle} (z)$ and $O_p (G)$ are contained in normal subgroups of $C_G (z)$ with coprime orders. This implies that $C_{\langle x \rangle} (z)$ centralizes $O_p (G)$ which is a contradiction. Thus, we can conclude that $C_{\langle x \rangle} (z) = C_{\langle x \rangle} (O_p (G))$ for all $z \in Z (O_p (G))$. (As $z \in O_p (G)$, the inclusion $C_{\langle x \rangle} (O_p (G)) \le C_{\langle x \rangle} (z)$ holds, and we have just shown that the containment cannot be proper.)  

Now, suppose $1 \ne y \in O_p (G)$.  We see that $Z(O_p (G)) \leq C_G(y)$.  By Lemmas \ref{tidy exp p 1} or \ref{tidy cyclic 1} again, we see that $C_G (y)$ has a normal $p$-complement. This implies that $C_{\langle x \rangle} (y)$ must centralize $Z (O_p (G))$. Thus, $C_{\langle x \rangle} (y) \leq C_{\langle x \rangle} (z) \leq C_{\langle x \rangle} (O_p (G))$ for all $1 \ne y \in O_p(G)$.   We deduce that the group  $\langle x \rangle O_p (G)/C_{\langle x \rangle} (O_p(G))$ is a Frobenius group with Frobenius kernel $O_p (G) \cong O_p (G) C_{\langle x \rangle} (O_p (G))/C_{\langle x \rangle} (O_p (G))$, and the result is proved.
\end{proof} 

We will see that we need to understand the centralizer of $O_p (G)$ in a Hall $p$-complement.  In this next lemma, we consider this situation when a Sylow $p$-subgroup has exponent $p$.

\begin{lemma} \label{tidy exp p 2}
Let $G$ be a tidy group and let $p$ be a prime.   Suppose that $G$ has a Sylow $p$-subgroup $P$ of exponent $p$ and let $H$ be a Hall $p$-complement of $G$.  If $O_p (G) > 1$ and $C = C_H (O_p(G))$, then $C$ is normal in $G$.  In addition, if $C < H$, then $O_p (G)H/C$ is a Frobenius group with Frobenius kernel $O_p (G) \cong O_p (G)C/C$ and Frobenius complement $H/C$ and $P \cap Z(G) = 1$.
\end{lemma}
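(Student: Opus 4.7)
The plan is to prove the three assertions in turn, using the enlarged centralizer $\tilde{C} := C_G(O_p(G))$ as the main device, together with the two workhorses of the paper: Lemma \ref{tidy exp p 1} for central elements of order $p$, and Lemma \ref{Lemma 1.6.} for coprime actions on $O_p(G)$.

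For the normality of $C$, first observe that $\tilde{C} \trianglelefteq G$, and that, because $\tilde{C}$ is normal in $G$, a standard counting argument in $G/\tilde{C}$ shows $C = H \cap \tilde{C}$ is a Hall $p'$-subgroup of $\tilde{C}$ (with $\tilde{C} \cap P$ as Sylow $p$-subgroup). Since $O_p(G) \neq 1$ is a $p$-group, $Z(O_p(G)) \neq 1$, and since every element of $\tilde{C}$ centralizes $O_p(G)$ we have $Z(O_p(G)) \leq Z(\tilde{C})$; pick any $z \in Z(\tilde{C})$ of order $p$. Then $\tilde{C}$ is tidy (as a subgroup of $G$), its Sylow $p$-subgroup $\tilde{C} \cap P$ has exponent $p$, and $z$ is central of order $p$, so Lemma \ref{tidy exp p 1} produces a normal $p$-complement of $\tilde{C}$; by uniqueness of the Hall $p'$-subgroup among the normal ones, that complement must be $C$. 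Hence $C$ is characteristic in $\tilde{C}$, and since $\tilde{C} \trianglelefteq G$, we conclude $C \trianglelefteq G$.

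For the Frobenius assertion, assume $C < H$ and pick any $h \in H \setminus C$. Because $P$ has exponent $p$, so does $O_p(G) \leq P$; in particular $O_p(G)$ is not the quaternion group of order $8$. This rules out case (3) of Lemma \ref{Lemma 1.6.}, while $h \notin C$ rules out case (1); case (2) therefore applies to $h$, which means $h$ has no nontrivial fixed point on $O_p(G)$. Since $O_p(G) \cap C = 1$ by coprimality and $C \trianglelefteq G$, passing to $G/C$ gives a semidirect decomposition $O_p(G)H/C = (O_p(G)C/C) \rtimes (H/C)$ in which $H/C$ acts fixed-point-freely on $O_p(G)C/C \cong O_p(G)$; this is precisely the Frobenius structure claimed.

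Finally, for $P \cap Z(G) = 1$, I would argue by contradiction: a nontrivial element $z \in P \cap Z(G)$ of order $p$ would trigger Lemma \ref{tidy exp p 1} for $G$ itself, producing a normal $p$-complement, which by uniqueness of Hall $p'$-subgroups must equal $H$; then $[H, O_p(G)] \leq H \cap O_p(G) = 1$ gives $H \leq C_G(O_p(G))$, so $H = C$, contradicting $C < H$. The main obstacle will be the first step, specifically the passage from normality of the $p$-complement in $\tilde{C}$ to normality of $C$ in $G$; the key insight that unlocks this is that a normal Hall subgroup of $\tilde{C}$ is characteristic there, so its normality lifts from $\tilde{C}$ to $G$.
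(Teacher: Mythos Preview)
Your proof is correct and follows essentially the same approach as the paper: both arguments establish that $C$ is the normal $p$-complement of $C_G(O_p(G))$ via Lemma~\ref{tidy exp p 1} (you apply it directly to $\tilde C$, the paper applies it to $C_G(z)$ for $z\in Z(P)\cap O_p(G)$ and then restricts), and both invoke Lemma~\ref{Lemma 1.6.} for the Frobenius structure. Your argument for $P\cap Z(G)=1$ differs slightly---you re-apply Lemma~\ref{tidy exp p 1} to $G$ itself and derive $H=C$, while the paper reads it off from the Frobenius action just established---but both are short and valid.
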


\begin{proof}
By order considerations, $G = PH$.  Since $1 < O_p (G)$ is a normal subgroup of $P$, the intersection $O_p (G) \cap Z(P) > 1$.  Thus, we can fix an element $z \in Z(P) \cap O_p (G)$.  Observe that $C$ and $P$ are both contained in $C_G (z)$.  By Lemma \ref{tidy exp p 1}, we see that $C_G (z)$ has a normal $p$-complement.  Observe that $C_G (O_P (G)) \le C_G (z)$, and so, $C_G (O_p (G))$ has a normal $p$-complement.  Since $O_p (G)$ is normal in $G$, we see that $C_G (O_p (G))$ is normal.  Thus, $C = H \cap C_G (O_p (G))$ is a Hall $p$-complement in $C_G (O_p (G))$.  It follows that $C$ is normal in $C_G (O_p(G))$, and thus, $C$ is characteristic in $C_G (O_p(G))$.  Using the fact that $C_G (O_p (G))$ is normal in $G$, we conclude that $C$ is normal in $G$.

We now assume that $C < H$.  By Lemma \ref{Lemma 1.6.}, we see that every element in $H \setminus C$ acts Frobeniusly on $O_p (G)$.  It follows that $HO_p(G)/C$ is a Frobenius group with Frobenius kernel $O_p (G) \cong O_p (G)C/C$ and Frobenius complement $H/C$. Notice that $Z(G) \cap P$ is a normal $p$-subgroup of $G$, so it is contained in $O_p (G)$.  Since $O_p (G) \cong O_p (G)C/C$ and $O_p (G)C/C$ is a Frobenius kernel, we conclude that $Z(G) \cap P = 1$.  
\end{proof}

In this next lemma, we make use of a result proved by Gorenstein and Walter in \cite{GoWa} regarding groups that have a dihedral subgroup as a Sylow $2$-subgroup.  




\begin{lemma} \label{Lemma 1.7.} 
Let $G$ be a  group. Suppose a Sylow $2$-subgroup of $G$ is a dihedral group. If $O_2 (G) > 1$ and is not elementary abelian of order $4$, then $G$ has a normal $2$-complement.
\end{lemma}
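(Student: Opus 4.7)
The plan is to exhibit a central involution of $G$ living inside $O_2(G)$ and then invoke Lemma~\ref{Lemma 1.20.} (which itself encodes the Gorenstein--Walter machinery) to obtain the desired normal $2$-complement.

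First I would observe that $O_2(G)$, being a subgroup of a dihedral $2$-group, is itself either cyclic or dihedral. The hypotheses that $O_2(G) > 1$ and that $O_2(G)$ is not elementary abelian of order~$4$ reduce the possibilities to exactly two: $O_2(G)$ is a non-trivial cyclic $2$-group, or $O_2(G)$ is a dihedral $2$-group of order at least $8$. In either case the centre $Z(O_2(G))$ contains a distinguished involution $z$: in the cyclic case $z$ is the unique element of order~$2$; in the dihedral case of order at least~$8$ the centre has order exactly~$2$ and $z$ is its non-identity element.

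Next I would argue that $z$ lies in $Z(G)$. Because $z$ is characterized intrinsically by the group structure of $O_2(G)$, the subgroup $\langle z\rangle$ is characteristic in $O_2(G)$; since $O_2(G)\nsub G$, this yields $\langle z\rangle \nsub G$. A normal subgroup of order~$2$ is automatically central in $G$, so $z\in Z(G)$ is a central involution.

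Finally, with $z\in Z(G)$ of order~$2$ and a dihedral Sylow $2$-subgroup, Lemma~\ref{Lemma 1.20.} applies directly and gives that $G$ has a normal $2$-complement. The plan is fairly mechanical; the only point demanding any care is the extraction of the central involution from $O_2(G)$, which rests on the elementary structural facts about dihedral $2$-groups recalled above (in particular, that the centre of a dihedral $2$-group of order at least $8$ has order exactly~$2$, which is precisely why the $V_4$ case must be excluded: its centre is the whole group and its distinguished involution is not singled out by being central in $O_2(G)$).
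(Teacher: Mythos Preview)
Your proof is correct and follows essentially the same route as the paper: identify a characteristic subgroup of order $2$ in $O_2(G)$ (using that subgroups of dihedral $2$-groups are cyclic or dihedral, and the $V_4$ case is excluded), deduce a central involution, and then apply Gorenstein--Walter. The only cosmetic difference is that the paper cites Lemma~8 of \cite{GoWa} directly, whereas you route through Lemma~\ref{Lemma 1.20.}, which already packages that citation; since Lemma~\ref{Lemma 1.20.} precedes Lemma~\ref{Lemma 1.7.} in the paper, this is a perfectly legitimate shortcut.
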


\begin{proof}
Let $P$ be a Sylow $2$-subgroup of $G$. We know that $P$ is a dihedral group and $O_2 (G)$ is a normal subgroup of $P$. Since $O_2 (G)$ is not elementary abelian of order $4$, we see that $O_2 (G)$ is either cyclic or dihedral. In either case, $O_2 (G)$ has a subgroup $Z$ of order $2$ that is characteristic in $O_2 (G)$. It follows that $Z$ is normal in $G$. Hence, $G$ is the centralizer of an involution. Since a Sylow $2$-subgroup of $G$ is dihedral, we can apply Lemma 8 of \cite{GoWa} 
to see that $G$ has a normal $2$-complement.
\end{proof}

We continue to investigate tidy groups that have a Sylow $2$-subgroup that is dihedral.  While we are not assuming that $G$ is solvable, we do assume that $G$ has a Hall $2$-complement.

\begin{lemma} \label{Lemma 1.8.} 
Let $G$ be a tidy group. Suppose a Sylow $2$-subgroup of $G$ is a dihedral group. Let $H$ be a Hall $2$-complement of $G$ and let $C = C_H (O_2 (G))$. If $O_2(G) > 1$, then $C$ is normal in $G$ and either $G$ has a normal $2$-complement or $G/C  \cong S_4$.
\end{lemma}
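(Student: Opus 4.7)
The plan is to split on whether $O_2(G)$ is elementary abelian of order $4$. In the first case (where $O_2(G) \not\cong V_4$), Lemma~\ref{Lemma 1.7.} immediately provides a normal $2$-complement $K$ of $G$, and since any Hall $2$-complement of $G$ must then coincide with $K$, we have $H = K$. Then $C_G(O_2(G))$ is normal in $G$ (because $O_2(G)$ is), and $C = H \cap C_G(O_2(G))$ is the intersection of two normal subgroups, hence normal in $G$.

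Now assume $Q := O_2(G) \cong V_4$, and let $P$ be a Sylow $2$-subgroup of $G$. Since $Q$ is normal in $P$ and $P$ is dihedral of order at least $8$, the next step is to inspect the normal subgroups of $D_{2^n}$: a Klein four subgroup is normal in $D_{2^n}$ only when $n = 3$, so $P \cong D_8$ is forced. A direct check then gives $C_P(Q) = Q$, so $P/C_P(Q)$ has order $2$ and acts on $Q$ as a transposition in $\aut{Q} \cong S_3$. I next invoke Lemma~\ref{Lemma 1.6.} for each $h \in H$ (noting the $Q_8$ case of that lemma does not arise here): whenever $h$ does not centralize $Q$, the automorphism it induces on $Q$ must act fixed-point-freely on $V_4$, forcing its image in $\aut{Q}$ to have order $3$. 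Since $|H|$ is odd, this shows $H/C$ is trivial or cyclic of order $3$.

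If $H = C$, then the unique element of $Q \setminus \{1\}$ fixed by $P$ is centralized by all of $G = PH$, so it lies in $Z(G)$, and Lemma~\ref{Lemma 1.20.} produces a normal $2$-complement; here $C = H$ is already normal. If $H/C \cong Z_3$, one first checks that $C_G(Q) = Q \times C$ (direct product, since $Q \cap C = 1$ and $C$ centralizes $Q$), so $C$ is the unique Hall $2$-complement of $C_G(Q)$, hence characteristic there; combined with $C_G(Q)$ being normal in $G$, this gives $C$ normal in $G$. Setting $\overline{G} = G/C$, one computes $|\overline{G}| = 24$, with $\overline{Q} \cong V_4$ normal and $\overline{QH} \cong V_4 \rtimes Z_3 \cong A_4$ of index $2$ in $\overline{G}$. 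Choosing any involution $t \in P \setminus Q$ (which exists in $D_8$), the image $\overline{t}$ is an involution outside $\overline{QH}$, so $\overline{G} = \overline{QH} \rtimes \langle \overline{t}\rangle$; and $\overline{t}$ acts non-trivially on $\overline{Q}$ as a transposition, so the action on $\overline{A_4} = \overline{QH}$ is non-trivial, which gives $\overline{G} \cong S_4$.

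I expect the main obstacle to be this final identification: groups of order $24$ with a normal $A_4$ of index $2$ are either $S_4$ or $A_4 \times Z_2$, and ruling out the direct product requires producing an involution in $P \setminus Q$ and verifying it acts non-trivially on $\overline{A_4}$. Both parts hinge on the structural fact that $P$ is exactly $D_8$, which ensures such an involution exists and that $C_P(Q) = Q$, so the induced action on $Q$ is faithful.
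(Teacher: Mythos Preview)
Your proof is correct and follows the same overall split as the paper: use Lemma~\ref{Lemma 1.7.} when $O_2(G) \not\cong V_4$, and in the $V_4$ case deduce $P \cong D_8$ and show $C$ is characteristic in $C_G(O_2(G))$, hence normal in $G$. The difference lies in how you finish the $V_4$ case. The paper invokes Lemma~\ref{tidy exp p 2} (applied to the tidy subgroup $C_G(O_2(G))$, whose Sylow $2$-subgroup has exponent $2$) to get that $C$ is its normal $2$-complement, and then argues that $G/C$ embeds in a group of order $24$ with a dihedral Sylow $2$-subgroup of order $8$, forcing $G/C \in \{D_8, S_4\}$. You instead use Lemma~\ref{Lemma 1.6.} to pin down $H/C \in \{1, Z_3\}$ and treat the two subcases separately, invoking Lemma~\ref{Lemma 1.20.} for the $H=C$ subcase and assembling $S_4$ by hand in the other. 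Your route is more explicit and avoids the (slightly slippery) embedding step in the paper; the paper's is shorter but leans on a structural embedding. You also spell out why $C$ is normal in the non-$V_4$ case, which the paper leaves implicit.
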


\begin{proof} 
When $O_2 (G)$ is not elementary abelian of order $4$, we may use Lemma \ref{Lemma 1.7.} to see that $G$ has a normal $2$-complement. Suppose $O_2 (G)$ is elementary abelian of order $4$. Write $P$ for a Sylow $2$-subgroup of $G$.  Since $O_2 (G)$ is a normal subgroup of $P$, we deduce that $P$ is dihedral of order $8$ and $C_P (O_2 (G)) = O_2 (G)$.  In particular, $C_G (O_2 (G))$ has an elementary abelian Sylow $2$-subgroup. By Lemma \ref{tidy exp p 2}, we see that $C$ is a normal $2$-complement of $C_G (O_2(G))$.  We see that $G/C$ is isomorphic to a subgroup of ${\rm Aut} (O_2 (G)) \cong S_4$.  Since a Sylow $2$-subgroup of $G$ is dihedral, the result now follows. 
\end{proof}

\section{Some tidy groups}

In this section, we prove that several groups are tidy groups. 
The following generalizes Proposition 2.5 of \cite{ctidynote}.  That result proves that a Frobenius group is tidy if and only if the Frobenius kernel is tidy.  We now consider certain extensions of Frobenius groups to determine when they are tidy.

\begin{theorem} \label{Lemma 1.26.}
Let $G$ be a group and let $N < F(G)$ be a normal subgroup of $G$ so that $G/N$ is a Frobenius group with Frobenius kernel $F (G)/N$ and Frobenius complement $H/N$.  Assume that $N$ is a Hall subgroup of $F(G)$ and either (1) $H$ is nilpotent or (2) $N$ is a Hall subgroup of $H'N$, 
$H'N/N$ has a complement $U/N$ in $H/N$, and $H'$ and $U$ are nilpotent groups. If the Sylow $p$-subgroups of $G$ are tidy for every prime $p$ that divides $|F(G)|$, then $G$ is tidy.
\end{theorem}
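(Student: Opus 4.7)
The plan is to invoke Lemma \ref{Lemma 1.22.} and verify that ${\rm Cyc}_G(x)$ is a subgroup for every element $x \in G$ of prime power order, say $|x|=p^k$. Since $N$ is a Hall subgroup of the nilpotent group $F(G)$, I would first record the direct decomposition $F(G)=N\times M$, with $M$ the complementary Hall subgroup, so that the Frobenius kernel $F(G)/N$ of $G/N$ is canonically identified with $M$. Then I would split the argument into two main cases according to the location of $x$: (i) $x\in F(G)$, so that $x$ lies in $N$ or in $M$; and (ii) $xN$ is a non-trivial element of the Frobenius complement of $G/N$, in which case, after replacing $x$ by a suitable $G$-conjugate, I may assume $x\in H$.

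In case (i), $p$ divides $|F(G)|$, so the Sylow $p$-subgroup $P$ of $G$ containing $x$ is tidy by hypothesis, and Theorem \ref{p-groups} restricts its isomorphism type. Since ${\rm Cyc}_G(x)\subseteq C_G(x)$, I would move the problem inside $C_G(x)$, where $x$ is central and the Sylow $p$-subgroup $C_P(x)$ is again tidy as a subgroup of $P$. Applying the relevant one of Lemmas \ref{Lemma 1.19.}, \ref{tidy exp p 1}, or \ref{Lemma 1.20.} (according to the four isomorphism types in Theorem \ref{p-groups}) would then express ${\rm Cyc}_G(x)$ as an explicit product of a piece of the Sylow $p$-subgroup with a normal $p$-complement of $C_G(x)$.

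Case (ii) is where hypothesis (1) or (2) enters. The Frobenius property forces $C_{G/N}(xN)\subseteq H/N$, hence $C_G(x)\subseteq H$ and ${\rm Cyc}_G(x)={\rm Cyc}_H(x)$, reducing the problem to a computation inside $H$. Under hypothesis (1), $H$ is nilpotent, so $H=P_0\times Q_0$ with $P_0$ its Sylow $p$-subgroup and $Q_0$ its Hall $p'$-subgroup; elementary $p$-part/$p'$-part bookkeeping gives ${\rm Cyc}_H(x)={\rm Cyc}_{P_0}(x)\cdot Q_0$. The group $P_0$ is tidy, either by the Sylow hypothesis when $p\mid|N|$ or, failing that, because $P_0\cong P_0N/N$ is a Sylow subgroup of the Frobenius complement $H/N$ and is thus cyclic or generalized quaternion by Theorem \ref{p-groups}. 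Under hypothesis (2), Schur--Zassenhaus applied to the normal Hall subgroup $N$ of $H'N$ produces a complement $L\cong H'/(H'\cap N)$, which is nilpotent, and together with the splitting $H/N=(H'N/N)\rtimes (U/N)$ this exhibits $H$ as a two-step extension of nilpotent pieces $N$, $L$, and $U$, along which I would decompose ${\rm Cyc}_H(x)$ analogously.

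I expect the principal obstacle to be case (ii) under hypothesis (2), where $H$ itself need not be nilpotent. The argument will require careful tracking of how the $p$-part of $x$ interacts with both the $H'N$-piece and the $U$-piece of the semidirect decomposition, and the Hall hypotheses on $N$ inside $F(G)$ and inside $H'N$ are exactly what one needs to guarantee the necessary direct-product splittings so that the cyclizer closes up as a subgroup.
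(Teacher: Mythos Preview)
Your framework is sound and your case~(ii) under hypothesis~(1) is handled correctly, but there is a genuine gap in case~(i) when $x\in N$. There $p$ divides $|N|$, and if the Sylow $p$-subgroup of $C_G(x)$ has exponent $p$ and is non-cyclic, Lemma~\ref{tidy exp p 1} says that ${\rm Cyc}_{C_G(x)}(x)$ is a subgroup \emph{if and only if} $C_G(x)$ has a normal $p$-complement---and you give no reason why it should. Your implicit reduction to $F(G)$ only works when $x\in M$, because then the Frobenius condition forces $C_G(x)\le F(G)$; when $x\in N$ nothing of the sort holds, and indeed one can easily have $H\le C_G(x)$ (take $x\in N\cap Z(H)$), so whether $C_G(x)$ has a normal $p$-complement depends on the structure of $H$. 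In short, hypotheses (1)/(2) are already needed in case~(i), contrary to your assertion that they enter only in case~(ii).

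The paper avoids this by organising the case split around the prime $p$ rather than the location of $x$. When $p\mid|N|$ it proves, once and for all, that $G$ itself has a normal $p$-complement: your $M$ is a normal $p'$-subgroup of $G$ with $G/M\cong H$, and under either (1) or (2) the group $H$ has a normal $p$-complement (trivially under (1); under (2) one uses the direct decomposition $H'=(H'\cap N)\times O_{\pi}(H')$ together with the nilpotence of $U$ to produce it). This single structural fact disposes of every $p$-power element $x$---whether in $N$, in $M$, or outside $F(G)$---via Lemmas~\ref{Lemma 1.19.}, \ref{tidy exp p 1}, or \ref{Lemma 1.20.} applied in $C_G(x)$. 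It also cleanly resolves what you left vague in case~(ii) under hypothesis~(2): rather than trying to decompose ${\rm Cyc}_H(x)$ along a two-step nilpotent filtration of $H$, one just observes that $C_G(x)$ inherits the normal $p$-complement from $G$.
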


\begin{proof} 
By Lemma \ref{Lemma 1.22.}, to prove that $G$ is tidy, it suffices to prove that ${\rm Cyc}_G (x)$ is a subgroup for every element $1 \ne x \in G$ that has prime power order.  Assume $x \in G$ has order that is a power of the prime $p$. 

Suppose first that $p$ does not divide $|F(G)|$.  Let $P$ be a Sylow $p$-subgroup of $G$.  We see that $P \cap N \le P \cap F(G) = 1$, so $P \cong PN/N$.  This implies that $P$ is isomorphic to a Sylow subgroup of a Frobenius complement.  Hence, $P$ is either cyclic or generalized quaternion. By Lemma \ref{Lemma 1.19.}, we see that ${\rm Cyc}_{C_G (x)} (x) = C_G(x)$. Since ${\rm Cyc}_G (x) \subseteq C_G(x)$, we deduce that ${\rm Cyc}_G (x) = C_G (x)$ is a subgroup of $G$.

We now suppose that $p$ divides $|F(G)|$.  We are assuming that $G$ has tidy Sylow subgroups for every prime dividing $|F (G)|$.  This implies that all of the Sylow subgroups of $F(G)$ are tidy.  Since $F (G)$ is nilpotent, we conclude that $F(G)$ is tidy.  We next suppose that $p$ does not divide $|N|$.  This implies that $xN$ is a nontrivial element of $F (G)/N$. Since $G/N$ is a Frobenius group, we deduce that $C_{G/N} (xN) \leq F (G)/N$.  It is not difficult to see that $C_G (x) N/N \le C_{G/N} (xN)$.  It then follows that $C_G (x) \leq F (G)$.  We now have that ${\rm Cyc}_G (x) = {\rm Cyc}_{F(G)} (x)$.  Since $F (G)$ is tidy, we conclude that ${\rm Cyc}_G (x)$ is a subgroup of $F(G)$ and hence, it is a subgroup of $G$.

We next consider that $p$ divides $|N|$.  If a Sylow $p$-subgroup of $C_G(x)$ is either cyclic or generalized quaternion, then we know that ${\rm Cyc}_G (x) = C_G (x)$ is a subgroup by Lemma \ref{Lemma 1.19.}. Thus, we may assume that a Sylow $p$-subgroup of $C_G (x)$ either has exponent $p$ or is dihedral.  In view of Lemmas \ref{tidy exp p 1} and \ref{Lemma 1.20.}, to show that ${\rm Cyc}_G (x)$ is a subgroup, it suffices to show that $C_G (x)$ has a normal $p$-complement. 


Let $\sigma$ be the set of prime divisors of $|F(G) : N|$, and let $L$ be the Hall $\sigma$-subgroup of $F(G)$. Note that $L = O_\sigma(F(G))$, and so $L$ is normal in $G$. Observe that $G = F(G)H = LNH = LH$. Consequently, $G/L \cong H/(H \cap L)$.The subgroups $H/N$ and $F(G)/N$ intersect trivially, and so $H \cap F(G) = N$. By Dedekind's Lemma, \[N = H \cap F(G) = H \cap LN = (H \cap L)N.\] In particular, $(H \cap L) \le N$. Since $N$ is Hall subgroup of $F(G)$ by our hypothesis and $|L|$ is a $\sigma$-number, we know that $N \cap L = 1$ by order considerations. Now, $(H \cap L) \le (N \cap L) = 1$, and so $G/L \cong H$.


We proceed with the hypothesis that $H$ is not nilpotent. So, the hypotheses in (2) are therefore in effect. As $N$ is a normal Hall subgroup of $H'N$, we know that $N \cap H'$ is a Hall subgroup of the nilpotent group $H'$. Let $L$ be the complement to $N \cap H'$ in $H'$, and note that, in fact, $L = O_\pi(H')$, where $\pi$ is the set of prime divisors of $| H' : N \cap H'| = |H'N: N|$. So, $L$ is normal in $H$ as $L$ is characteristic in the subgroup $H'$, which in turn is normal in $H$. (In fact, $L$ is a characteristic subgroup of $H$.) Now, \[H = H'NU = H'U = L(N \cap H')U = LU.\] Since $U/N$ is a complement to $H'N/N$, we know that the subgroups $U/N$ and $H'N/N$ intersect trivially. Thus, $U \cap H'N = N$. Note that $H'N = L(N \cap H')N = LN$, and so by Dedekind's Lemma \[ N = U \cap LN = (U \cap L)N.\] Hence, $(U \cap L) \le N$, and so $(U \cap L) \le (N \cap L) = 1$, where we used the fact that $|L|$ and $|N|$ are coprime. Thus, $H/L \cong U$. By (2), $U$ is nilpotent; hence, $H/L$ is nilpotent. Now, bearing in mind that $L$ is a $p'$-subgroup of $H$, we have that the subgroup $V$ is a normal $p$-complement of $H$, where $V/L = O_{p'}(H/L)$.
\end{proof}

In the hypotheses of Theorem \ref{Lemma 1.26.}, we have a nilpotent normal subgroup $N$ of a group $G$ such that $G/N$ is a Frobenius group with Frobenius kernel $F(G)/N$.  Since the Frobenius kernel of a Frobenius group is the Fitting subgroup of the group, we actually have that $$F(G/N) = F(G)/N.$$  Nilpotent normal subgroups $N$ of a group $G$ that satisfy the equation $F(G/N) = F(G)/N$ were called {\it generalized Frattini subgroups} by Beidleman and Seo in \cite{beidlemanseo} and further studied by Beidleman in \cite{biedleman2}.  Beidleman and Dykes considered a further generalization of these subgroups in \cite{beidlemandykes}.

It is well known that ${\rm SL}_2 (3)$ has a unique non split extension by $Z_2$.  This group is isoclinic to ${\rm GL}_2 (3)$ and so, we denote it by $\widetilde{{\rm GL}_2 (3)}$.  This group occurs as a Frobenius complement and it has a Sylow $2$-subgroup that is generalized quaternion of order $16$.  We determine the tidiness of certain extensions of $S_4$.

\begin{theorem} \label{Lemma 1.27.} 
Suppose $G$ is a $\{ 2, 3 \}$-group that satisfies one of the following:
\begin{enumerate}
\item $O_2(G)$ is a Klein $4$-group, $G/O_3(G) \cong S_4$ and $G/O_2 (G)$ is a Frobenius group whose Frobenius kernel is the Sylow $3$-subgroup of $G/O_2(G)$ and whose Frobenius complement has order $2$.
\item $O_2 (G)$ is the Sylow $2$-subgroup of $G$ and is the quaternion group of order $8$ and $G/O_3(G) \cong {\rm SL}_2 (3)$.
\item $O_2 (G)$ is the quaternion group of order $8$, $G/O_3(G) \cong \widetilde {{\rm GL}_2 (3)}$ and $G/O_2 (G)$ is a Frobenius group whose Frobenius kernel is the Sylow $3$-subgroup of $G/O_2 (G)$ and whose Frobenius complement has order $2$.
\end{enumerate}
Then $G$ is a tidy group.
\end{theorem}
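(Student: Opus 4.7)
The plan is to apply Lemma~\ref{Lemma 1.22.}, reducing the verification that $G$ is tidy to checking that ${\rm Cyc}_G(x)$ is a subgroup for each $x \ne 1$ of prime power order. Since $G$ is a $\{2,3\}$-group, I would treat elements of $3$-power order and of $2$-power order separately, and in each case the strategy is to exhibit a normal $p$-complement of $C_G(x)$ (with $p$ the prime dividing the order of $x$), after which Lemma~\ref{Lemma 1.19.}, Lemma~\ref{tidy exp p 1}, or Lemma~\ref{Lemma 1.20.} applied to $C_G(x)$ delivers the conclusion.

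For $x$ of $3$-power order, the target is a normal $3$-complement of $C_G(x)$. In cases~(1) and~(3), the Frobenius hypothesis on $G/O_2(G)$ forces $C_G(x)O_2(G)/O_2(G)$ to lie in the Frobenius kernel $P_3O_2(G)/O_2(G)$, so $C_G(x)$ is contained in the preimage $M = O_2(G)P_3$. The Frobenius kernel is abelian (since its complement has order $2$), so $P_3$ is abelian and $P_3 \le C_G(x)$; combined with $C_{O_2(G)}(x) = C_G(x) \cap O_2(G) \trianglelefteq C_G(x)$ (which uses $O_2(G) \trianglelefteq G$), this exhibits $C_{O_2(G)}(x)$ as a normal $3$-complement of $C_G(x)$. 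In case~(2), a direct centralizer computation would be used: from $G/O_3(G) \cong SL_2(3)$ together with $[O_2(G), O_3(G)] = 1$ (so that $Z(O_2(G)) \le Z(G)$), one finds $C_G(x) = O_2(G) \times C_{O_3(G)}(x)$ when $x \in O_3(G)$ and $C_G(x) = Z(O_2(G)) \times P_3$ when $x \in P_3 \setminus O_3(G)$, producing a normal $3$-complement in either sub-case. Lemma~\ref{Lemma 1.19.} or Lemma~\ref{tidy exp p 1} then concludes, according as $P_3$ is cyclic or of exponent~$3$.

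For $x$ of $2$-power order, cases~(2) and~(3) are immediate: a Sylow $2$-subgroup of $G$ is generalized quaternion ($Q_8$ and $Q_{16}$, respectively), so a Sylow $2$-subgroup of $C_G(x)$ is cyclic or generalized quaternion, and Lemma~\ref{Lemma 1.19.} applied to $C_G(x)$ gives ${\rm Cyc}_G(x) = C_G(x)$. In case~(1), the image $\overline{x}$ of $x$ in $G/O_3(G) \cong S_4$ is necessarily a $4$-cycle, a transposition, or a double transposition, and in each case $C_{S_4}(\overline{x})$ is a $2$-group. Hence $C_G(x)/O_3(G)$ is a $2$-group, so $O_3(G)$ is a Sylow $3$-subgroup of $C_G(x)$ and is a normal $2$-complement of $C_G(x)$. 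According as a Sylow $2$-subgroup of $C_G(x)$ is cyclic, elementary abelian of order~$4$, or the full $D_8$, one then invokes Lemma~\ref{Lemma 1.19.}, Lemma~\ref{tidy exp p 1}, or Lemma~\ref{Lemma 1.20.} to conclude.

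The main obstacle I anticipate is the $3$-power-order analysis in case~(2): with no Frobenius hypothesis available on $G/O_2(G)$, one must compute $C_G(x)$ by hand in the two sub-cases $x \in O_3(G)$ and $x \notin O_3(G)$, and this requires carefully exploiting $[O_2(G), O_3(G)] = 1$ and $Z(O_2(G)) \le Z(G)$. A secondary delicate point is the case-(1) subcase with a Sylow $2$-subgroup of $C_G(x)$ equal to $V_4$: here one must confirm that $\overline{x}$ is specifically a transposition in $S_4$, so that $O_3(G)$ really is the entire $3$-part (and hence the normal $2$-complement) of $C_G(x)$.
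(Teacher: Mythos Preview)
Your overall strategy---reduce via Lemma~\ref{Lemma 1.22.} to prime-power elements, then produce a normal $p$-complement in $C_G(x)$ so that one of Lemmas~\ref{Lemma 1.19.}, \ref{tidy exp p 1}, \ref{Lemma 1.20.} applies---is exactly the paper's approach, and your treatments of $2$-elements and of $3$-elements in cases~(1) and~(3) match the paper closely (modulo the harmless slip of writing ``$O_3(G)$'' where you mean ``$C_G(x)\cap O_3(G)$'' as the normal $2$-complement).

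The one genuine misstep is in your ``main obstacle,'' case~(2) for $3$-elements. Your direct centralizer formulas are not correct in general: for $x\in O_3(G)$ you assert $C_G(x)=O_2(G)\times C_{O_3(G)}(x)$, but an element of $P_3\setminus O_3(G)$ can centralize $x$ (for instance when $O_3(G)$ is central in $P_3$), so $C_G(x)$ can be strictly larger; the product is also not direct once $C_{P_3}(x)\not\le O_3(G)$, since such elements act nontrivially on $O_2(G)$. Similarly, $C_G(x)=Z(O_2(G))\times P_3$ for $x\in P_3\setminus O_3(G)$ presumes both that $P_3$ is abelian and that $O_3(G)\le C_G(x)$, neither of which is given. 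The paper sidesteps all of this: in case~(2) the hypothesis is precisely that $O_2(G)$ \emph{is} the Sylow $2$-subgroup, so $G=P_3O_2(G)$ and the containment $C_G(x)\le P_3O_2(G)$ is automatic. Then $C_G(x)\cap O_2(G)$ is the (normal) Sylow $2$-subgroup of $C_G(x)$, i.e.\ a normal $3$-complement, and Lemma~\ref{Lemma 1.19.} or~\ref{tidy exp p 1} finishes. So case~(2) is in fact the easiest of the three, not the hardest.
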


\begin{proof} 
By Lemma \ref{Lemma 1.22.}, it suffices to prove that ${\rm Cyc}_G (x)$ is a subgroup for $1 \ne x \in G$ when $x$ has either $2$-power or $3$-power order. Suppose first that $1 \ne x$ has $3$-power order, and let $P$ be a Sylow $3$-subgroup of $G$.  We claim that $C_G (x) \leq PO_2(G)$. In (2), we have $G = PO_2(G)$, so the claim holds trivially. In (1) and (3), we know $G/O_2(G)$ is a Frobenius group whose Frobenius kernel is $PO_2(G)/O_2(G)$. This implies that $C_{G/O_2(G)} (xO_2(G)) \le PO_2(G)/O_2(G)$, and so, $C_G (x) \le PO_2(G)$.  It follows that the claims holds in these cases also.  Thus, $C_G (x)$ has a normal $2$-complement.  Since either $P$ is cyclic or has exponent $3$, this implies ${\rm Cyc}_G (x)$ is a subgroup by either Lemma \ref{tidy exp p 1} or \ref{Lemma 1.19.}.

Now, suppose $1 \ne x$ has $2$-power order. If $o(x) = 4$, then $\langle x \rangle$ will be a Sylow $2$-subgroup of $C_G (x)$, and so, ${\rm Cyc}_G (x)$ is a subgroup by Lemma \ref{Lemma 1.19.}.  We suppose $o(x) = 2$. If a Sylow $2$-subgroup of G is generalized quaternion, then we have that ${\rm Cyc}_G (x)$ is a subgroup by Lemma \ref{Lemma 1.19.} again. We are left with the case where a Sylow $2$-subgroup of G is dihedral of order $8$. This implies that we are in hypothesis (1). It is not  difficult to see that $C_G (x)$ will contain a Sylow $2$-subgroup $T$ of $G$. Since $G/O_3 (G) \cong S_4$, we see that $O_3 (G)$ is the Sylow $3$-subgroup of $C_G (x)$.  In particular, $C_G (x)$ has a normal $2$-complement. This implies that $C_G (x)$ is a subgroup by Lemma \ref{Lemma 1.20.}, and the result is proved.
\end{proof}

Recall that a group $G$ is a $2$-Frobenius group if there exists normal subgroups $1 < K < N < G$ so that $N$ is a Frobenius group with Frobenius kernel $K$ and $G/K$ is a Frobenius group with Frobenius kernel $N/K$.  The group $S_4$ is an example of a $2$-Frobenius group.  One consequence of this next lemma is that $S_4$ is the only $2$-Frobenius group that is a tidy group.

\begin{lemma}\label{Lemma 1.13.}
Suppose $G$ is a tidy group, and suppose there exists a normal subgroup $L$ so that $G/L$ is a 2-Frobenius group.  If there exists a normal subgroup $M$ of $G$ so that $L \cap M = 1$ and $ML/L$ is the Fitting subgroup of $G/L$, then $M$ is a Klein $4$-subgroup, $G/L$ is isomorphic to $S_4$, and $|L|$ is odd.
\end{lemma}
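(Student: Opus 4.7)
The plan is to identify $M$ with the Fitting subgroup of $G/L$, produce an element $y \in G$ acting fixed-point-freely on $M$ by conjugation, and then apply Lemma \ref{Lemma 1.6.} together with Theorem \ref{p-groups} to pin down the structure.

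First I would set up the 2-Frobenius data: there exist normal subgroups $L \le K \le N \le G$ with $K/L = F(G/L) = ML/L$, with $N/L$ a Frobenius group having kernel $K/L$ and complement $N/K$, and with $G/K$ a Frobenius group having kernel $N/K$. Since $M \cap L = 1$, the map $m \mapsto mL$ is an isomorphism $M \to K/L$, so $M$ is nilpotent; and since $M$ and $L$ are both normal in $G$ with trivial intersection, $K = M \times L$. In particular, for each prime $p$ dividing $|M|$, the Sylow $p$-subgroup $M_p$ is characteristic in $M$, hence normal in $G$, and is contained in $O_p(G)$. Next, pick a prime $q$ dividing $|N/K|$; by the Frobenius coprimality of $|K/L|$ and $|N/K|$, we have $q \nmid |M|$. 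In a Sylow $q$-subgroup $Q$ of $N$ we have $Q \cap K = Q \cap L$, so $Q \not\subseteq K$, and we can choose a $q$-element $y \in Q \setminus K$. Because $yL$ is a nonidentity element of the inner Frobenius complement $(N/L)/(K/L)$, conjugation by $y$ acts fixed-point-freely on $K/L \cong M$.

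Now I would apply Lemma \ref{Lemma 1.6.} at each prime $p$ dividing $|M|$. Case (1) of the lemma is excluded: $y$ centralizing $O_p(G) \supseteq M_p$ contradicts fixed-point-freeness on $M$. Case (3) would force $p = 2$, $O_2(G) \cong Q_8$, and $y$ of order $3$; but the only subgroups of $Q_8$ fixed setwise by the induced order-$3$ automorphism are $1$, $Z(Q_8) = \{1, -1\}$, and $Q_8$ itself (the three cyclic subgroups of order $4$ are permuted cyclically), and in each nontrivial choice $y$ fixes $-1$, contradicting the fixed-point-free action on $M_2$. Hence case (2) must hold for every $p \mid |M|$: $y$ acts fixed-point-freely on all of $O_p(G)$. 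Combined with Theorem \ref{p-groups}, this eliminates cyclic and generalized-quaternion Sylow $p$-subgroups of $G$ containing $O_p(G)$, and also dihedral Sylow 2-subgroups of order at least $16$, since each of those has a characteristic cyclic subgroup of order at least $4$ whose automorphism group is a $2$-group and so admits no fixed-point-free odd-order action. Thus $O_p(G)$ is elementary abelian, and for $p = 2$ the Sylow 2-subgroup of $G$ is either elementary abelian or isomorphic to $D_8$.

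Finally, observe that $N/K$ is simultaneously a Frobenius kernel (of $G/K$) and a Frobenius complement (of $N/L$), hence a nilpotent group with cyclic or generalized-quaternion Sylow subgroups; and it admits a nontrivial fixed-point-free action by $G/N$, which forces $N/K$ to be cyclic of odd order (the only nilpotent Frobenius complements admitting a nontrivial fixed-point-free action). Combining this with the embedding $G/K \hookrightarrow \operatorname{Aut}(M)$ and the tidiness of $G$, a careful bookkeeping pins down $|M| = 4$, $M \cong V_4$, $N/K \cong Z_3$, $G/N \cong Z_2$, and hence $G/L \cong S_4$. For $|L|$ odd, an even $|L|$ would place a nontrivial $2$-subgroup $O_2(L) \le O_2(G)$ alongside $M_2 = V_4$, and the enlarged $O_2(G)$ could not both admit a fixed-point-free $Z_3$-action and sit inside a tidy Sylow 2-subgroup of $G$ projecting onto the $D_8$ Sylow of $S_4$. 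The main obstacle is this final step: rigorously ruling out larger elementary abelian $M$'s, odd prime divisors of $|M|$, and even $|L|$ requires combining the FPF-action data with the Frobenius-complement constraints on $N/K$ and $G/N$ and the tidy classification of Sylow subgroups of $G$, in each case pinpointing specific elements $x \in M$ for which $\operatorname{Cyc}_G(x)$ would fail to be a subgroup in the forbidden configurations.
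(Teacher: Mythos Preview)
Your argument has a real gap, and the route you are on does not supply enough leverage to close it.

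First, a local error: your elimination of cyclic Sylow $p$-subgroups is justified only for $p=2$. Your reason (``characteristic cyclic subgroup of order at least $4$ whose automorphism group is a $2$-group'') is a statement about $2$-groups. For odd $p$, $\operatorname{Aut}(Z_{p^k})$ is cyclic of order $p^{k-1}(p-1)$ and certainly contains odd-order elements acting fixed-point-freely, so the FPF action of your odd-order $y$ does \emph{not} exclude a cyclic $O_p(G)$. Hence ``$O_p(G)$ is elementary abelian'' is unjustified for odd $p$.

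Second, and more seriously, the final paragraph is not a proof. Knowing only that the inner complement $N/K$ is cyclic of odd order and acts FPF on each $M_p$ does not single out $S_4$: there are $2$-Frobenius groups with elementary abelian kernel of odd prime order (e.g.\ $K/L\cong Z_5^2$, $N/K\cong Z_3$, $G/N\cong Z_2$) satisfying all of your intermediate conclusions. You would still have to locate a specific $x$ with $\operatorname{Cyc}_G(x)$ not a subgroup in every such configuration, and your outline gives no mechanism for doing so.

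The paper's proof uses a different lever: it works with a prime-order subgroup $Q/L$ of the \emph{outer} complement $B/L\cong G/N$, not the inner one. If the prime $q=|Q/L|$ differed from some $p\mid|M|$, then every nonidentity element of the Frobenius group $HQ/L$ would be a $p'$-element and, by Lemma~\ref{Lemma 1.6.}, would act either trivially or FPF on $O_p(G)\supseteq P$; a short argument then forces $HQ/L$ to act faithfully and FPF on $P$, making the Frobenius group $HQ/L$ a Frobenius complement, which is impossible. Hence $p=q$, so $M$ and $G/N$ are $p$-groups for one prime $p$. A permutation-basis argument (Theorem~15.16 of \cite{text}) then exhibits $Z_p\wr Z_p$ (of exponent $p^2$) inside a Sylow $p$-subgroup of $G$, which by Theorem~\ref{p-groups} forces $p=2$, the Sylow to be $D_8$, $M\cong V_4$, and $G/L\cong S_4$. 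This outer-complement step is exactly the missing idea in your approach.
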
 

\begin{proof} We have $1 \le L < K < N < G$ so that $N/L$ is a Frobenius group with Frobenius kernel $K/L$ and $G/K$ is a Frobenius group with Frobenius kernel $N/K$. Observe that $K/L$ is the Fitting subgroup of $G/L$, so $K = ML$.  Since $M \cap L = 1$, we have $K = M \times L$.  Let $H/L$ be a Frobenius complement of $N/L$. By the Frattini argument, we have that $G/L = N/L N_{G/L}(H/L) = KH/L N_{G/L}(H/L) = K/L N_{G/L} (H/L)$. Observe that $K/L \cap N_{G/L} (H/L) = 1$, so $N_{G/L}(H/L) \cong G/K$ is a Frobenius group with Frobenius kernel $H/L$. Let $B/L$ be a Frobenius complement of $N_{G/L} (H/L)$. 
	
Let $p$ be a prime divisor of $|M|$, and let $P$ be a Sylow $p$-subgroup of $M$. Observe that $PHB/L$ is a 2-Frobenius group. Let $Q/L$ be a subgroup of $B/L$ of prime order $q$. Observe that $P HQ/L$ is a 2-Frobenius group.  Suppose $p \ne q$.  Observe that $HQ/L$ acts nontrivially on $P$ and $Q/L$ cannot be contained in the kernel of this action.  We have that $Q/L$ acts nontrivially on $P$.  On the other hand, $HQ/L$ is not a Frobenius complement, so we have that $C_P (Q/L) > 1$. We now see that $C_{Q/L} (P) = 1$ and $Q/L$ does not act Frobeniusly on $P$. This contradicts Lemma \ref{Lemma 1.6.}. Thus, we must have $p = q$, which implies that $M$ and $B/L$ are both $p$-groups.  Since $MH/L$ is a Frobenius group, we see that $M$ is not dihedral or generalized quaternion.  Since $HB/L$ is nonabelian and is isomorphic to a subgroup of $G/C_G (M) \le {\rm Aut} (M)$, it follows that $M$ must be cyclic. 

By Theorem \ref{p-groups}, $M$ must have exponent $p$.  This implies that $Z := Z(M)$ is elementary abelian, and we can view $Z$ as a module for $Z_p$. Observe also that $ZHQ/L$ is a $2$-Frobenius group. Now, we now apply Theorem 15.16 of \cite{text} to see that $Z$ has a basis which is permuted by $Q/L$ in orbits of size $p$. Let $\{a_1, \dots a_p \}$ be one of these orbits.  We see that $\langle a_1, Q/L \rangle$ is isomorphic to $Z_p \wr Z_p$. It is well-known that $Z_p \wr Z_p$ has exponent $p^2$ (see Problem 4A.7 in \cite{Isagrp}).  Since $M$ is not cyclic, we can use Theorem \ref{p-groups} to see that a Sylow $p$-subgroup of $G$ must have exponent $p$ when $p$ is odd. Hence, we must have that $p = 2$. Now, the Sylow $2$-subgroup of G is not abelian, and has a normal subgroup that is elementary abelian.  In light of Theorem \ref{p-groups} again, we deduce that a Sylow $2$-subgroup of $G$ is dihedral of order $8$ and $M = Z_2 \times Z_2$.  Since ${\rm Aut} (M) \cong {\rm GL}_2 (2)$, we have $H/L \cong Z_3$ and $HB/L \cong S_3$. We finally conclude that $G/L \cong S_4$ and $|L|$ is odd.
\end{proof}

\section{Tidy $\{ p, q\}$-groups} \label{secntidypq}

In this section, we classify the tidy $\{p , q \}$-groups.  We first consider the case where the Sylow $2$-subgroup is generalized quaternion and $O_2 (G)$ has order $2$.  

\begin{lemma}\label{Lemma 1.10.} 
Let $G$ be a solvable tidy group. Suppose a Sylow $2$-subgroup $T$ of $G$ is generalized quaternion. If $|O_2 (G)| = 2$, then $T$ is a quaternion group of order $8$, $G$ has a normal $2$-complement, and $|F(G) : C_{F (G)}(T)|$ is divisible by at least two primes.	
\end{lemma}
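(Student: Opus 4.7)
My plan is the following. First, I would observe that since $|O_2(G)|=2$, the subgroup $O_2(G)$ is central in $G$; because $T$ is generalized quaternion with a unique involution, $O_2(G)=Z(T)=\langle z\rangle$ for some $z\in Z(G)$ of order $2$. Setting $K=O_{2'}(G)$, solvability gives $F(G)=\langle z\rangle\times F(K)$ and $C_G(F(G))\leq F(G)$; the case $F(K)=1$ is immediately excluded, since it would force $C_G(F(G))=G\leq\langle z\rangle$, contradicting $|T|\geq 8$. Intersecting $C_G(F(G))\leq F(G)$ with $T$ yields $C_T(F(K))=\langle z\rangle$, so $V:=T/\langle z\rangle$ acts faithfully on $F(K)$.

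The key technical step---and the main obstacle---is to show that for every odd prime $p$ with $O_p(G)>1$, the image of $T$ in $\aut{O_p(G)}$ has order at most $2$. I would pick a generator $a$ of the maximal cyclic subgroup of $T$ and an element $b\in T\setminus\langle a\rangle$ of order $4$, so $bab^{-1}=a^{-1}$. For any order-$4$ element $x$ of $T$, the central element $z=x^2$ lies in $C_{\langle x\rangle}(O_p(G))$, so Lemma~\ref{Lemma 1.6.} forces the image $\overline{x}$ in $\aut{O_p(G)}$ to be either trivial or a fixed-point-free involution. If $\overline{b}\notin\langle\overline{a}\rangle$, then $\langle\overline{a},\overline{b}\rangle$ is dihedral of order $\geq 4$: Lemma~\ref{Lemma 1.6.} applied to $a$ shows each nonidentity $\overline{a}^k$ acts fixed-point-freely, and each $\overline{a}^k\overline{b}=\overline{a^kb}$ (with $a^kb\in T$ of order $4$) acts fixed-point-freely by the previous observation. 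Then $\langle\overline{a},\overline{b}\rangle$ would be a dihedral Frobenius complement on $O_p(G)$---impossible, since Frobenius complements have a unique involution. Hence $\overline{b}\in\langle\overline{a}\rangle$, the group $\langle\overline{a},\overline{b}\rangle$ is abelian, and $\overline{b}\overline{a}\overline{b}^{-1}=\overline{a}^{-1}$ forces $\overline{a}^2=1$.

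With this in hand the three conclusions drop out quickly. Since $\overline{a}^2=1$ for every such $p$, $a^2\in C_G(F(K))\leq F(G)$, whence $a^2\in T\cap F(G)=\langle z\rangle$; so $|a|\leq 4$ and $T=Q_8$, proving (a). The kernel $K_p$ of $T\to\aut{O_p(G)}$ then always contains $\langle z\rangle$ and has index at most $2$ in $T$, so $K_p\in\{T,\langle i\rangle,\langle j\rangle,\langle k\rangle\}$. For (b) I would suppose $G$ has no normal $2$-complement; since $\aut{P}$ is a $2$-group for every proper subgroup $P\leq Q_8$, Frobenius' normal $p$-complement theorem forces $N_G(T)/C_G(T)\leq\aut{Q_8}=S_4$ to contain an element of order $3$, which lifts to some $h\in N_G(T)$ inducing a cyclic permutation of $\{\langle i\rangle,\langle j\rangle,\langle k\rangle\}$. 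Each $K_p$ is $h$-invariant (because $O_p(G)$ is), and the only $h$-invariant subgroup of $T$ of index $\leq 2$ is $T$ itself; thus $K_p=T$ for every $p$, so $T$ centralizes $F(K)$, contradicting the faithful action of $V$. Finally, for (c), faithfulness gives $\bigcap_p K_p=\langle z\rangle$, and since any two distinct members of $\{\langle i\rangle,\langle j\rangle,\langle k\rangle\}$ already intersect in $\langle z\rangle$, at least two distinct $K_p$'s must come from that list, yielding two distinct odd primes $p$ with $C_{O_p(G)}(T)<O_p(G)$; hence $|F(G):C_{F(G)}(T)|=\prod_p|O_p(G):C_{O_p(G)}(T)|$ has at least two prime divisors.
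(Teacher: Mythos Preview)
Your argument is correct, modulo one trivially patched omission: in the dichotomy ``$\overline{b}\in\langle\overline{a}\rangle$ versus $\overline{b}\notin\langle\overline{a}\rangle$'', the case $\overline{a}=1$ with $\overline{b}\ne 1$ lands in the second branch but does \emph{not} yield a dihedral group of order $\ge 4$ (it gives $Z_2$, which is a perfectly good Frobenius complement). Of course $\overline{a}^2=1$ is immediate there, so simply dispose of $\overline{a}=1$ first and the rest goes through.

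Compared with the paper, the overall architecture is the same (reduce to $C_T(F^{2'}(G))=Z$, analyse the action on each $O_p(G)$ via Lemma~\ref{Lemma 1.6.}), but the two key steps are executed differently. For $T\cong Q_8$, the paper applies Lemma~\ref{Lemma 1.6.} once to conclude that $T/C_T(O_{p_i}(G))$ is a Frobenius complement; since $T/Z$ is dihedral, no quotient of it is generalized quaternion, so these complements are cyclic, whence $T'\le\bigcap_i C_T(O_{p_i}(G))=Z$ and $T$ has class $2$. This is shorter than your element-by-element argument, though yours gives the slightly sharper fact $|T:K_p|\le 2$ directly. For the normal $2$-complement, the paper takes a different route: once $|T:K_{p_i}|\le 2$, each $G/C_G(O_{p_i}(G))$ has a Sylow $2$-subgroup of order at most $2$, so Burnside's corollary gives a normal $2$-complement in each quotient; intersecting the preimages $H\,C_G(O_{p_i}(G))$ yields $HZ\trianglelefteq G$, hence $H\trianglelefteq G$. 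Your alternative via Frobenius' criterion and the $3$-cycle in $\aut{Q_8}$ permuting $\{\langle i\rangle,\langle j\rangle,\langle k\rangle\}$ is a genuinely different and rather elegant argument; it trades the more elementary Burnside step for a global contradiction exploiting the subgroup lattice of $Q_8$. The divisibility conclusion (c) is argued essentially the same way in both.
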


\begin{proof} 
The hypothesis that $|O_2 (G)| = 2$ implies that $Z = O_2(G) \le Z(G)$. Since $G$ is not abelian, we see that $Z(G) \ne F(G)$. Thus, $O_{2'} (G) > 1$. In particular, there is an odd prime $p$ so that $O_p (G) > 1$. Let $p_1, \dots, p_n$ be the odd prime divisors of $|F(G)|$ so that $F(G) = Z \times F^{2'} (G)$ where $F^{2'} (G) = O_{p_1} (G) \times \cdots \times O_{p_n} (G)$ is the Hall $2$-complement of $F(G)$.  Since $Z$ is central, we know that $C_G (F^{2'} (G)) = C_G (F(G)) \leq F(G)$, so $C_T (F^{2'} (G)) \leq T \cap F(G) = Z$. Since $Z \leq C_T (F^{2'} (G))$, we deduce that $C_T (F^{2'} (G)) = Z$. Observe that no quotient of $T/Z$ is generalized quaternion.  

By Lemma \ref{Lemma 1.6.}, we see that every element of $T/C_T(O_{p_i}(G))$ acts Frobeniusly on $O_{p_i} (G)$, and so, $T/C_T(O_{p_i} (G))$ is a Frobenius complement. Since it is not generalized quaternion, we conclude that $T/C_T(O_{p_i} (G))$ is cyclic for each value of $i$.  This implies that $T' \le C_T (O_{p_i}(G))$ and thus, $T' \le \cap_{i=1}^n C_T(O_{p_i} (G)) = C_T(F^{2'} (G)) = Z$. This implies that $T$ has nilpotence class $2$.  The only generalized quaternion group with nilpotence class $2$ is the quaternion group of order $8$, and thus, $T$ is the quaternion group. Notice that $T' < C_T (O_{p_i} (G))$, so we would have a contradiction if only one prime divides $|F(G) : C_{F (G)}(T)|$. Thus, at least two primes divide $|F(G) : C_{F (G)}(T)|$.

To complete the proof, we need to show that $G$ has a normal $2$-complement.  Let $H$ be a $2$-complement of $G$. If $C_T (O_{p_i} (G)) < T$ for some $i$, then $G/C_G (O_{p_i} (G))$ has a Sylow $2$-subgroup of order $2$.  By a corollary to the Burnside normal $p$-complement theorem (Corollary 5.14 of \cite{Isagrp}), this implies that $G/C_G(O_{p_i} (G))$ has a normal $2$-complement.  Hence, we have $HC_G (O_{p_i} (G))$ is normal in $G$ for each such $i$.  On the other hand, if $C_T (O_{p_i} (G)) = T$, then $HC_G (O_{p_i} (G)) = G$.  We deduce that $HC_G (O_{p_i} (G))$ is normal in $G$ for all $i$.  This implies that $\cap_{i=1}^n HC_G(O_{p_i} (G)) = H(\cap_{i=1}^n C_G(O_{p_i}(G)))$ is a normal subgroup of $G$. Observe that $HZ \le H \cap_{i=1}^n C_G(O_{p_i}(G)) = H C_G(F^{2'} (G)) \le HF(G) = HZ$. Thus, $HZ$ is normal in $G$. Since $Z$ is central, we see that $H$ is normal and thus characteristic in $HZ$. We now obtain the desired conclusion that $H$ is normal in $G$.
\end{proof} 

We next consider the centralizer $C_H (O_2(G))$ for a group $G$ where a Sylow $2$-subgroup of $G$ is generalized quaternion group and $H$ is a $2$-complement of $G$.  Note the similarity to Theorem \ref{Lemma 1.22.}.

\begin{lemma} \label{Lemma 1.12.} 
Let $G$ be a tidy group. Suppose that a Sylow $2$-subgroup of $G$ is a generalized quaternion group. Let $H$ be a $2$-complement of $G$ and let $C = C_H (O_2(G))$. Then $C$ is normal in $G$.
\end{lemma}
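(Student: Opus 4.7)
The plan is to mirror the strategy used in Lemma \ref{tidy exp p 2}: reduce the question to showing that $C_G(O_2(G))$ admits a normal $2$-complement, and then identify that complement with $C$. Because $C_G(O_2(G)) \trianglelefteq G$, any characteristic subgroup of $C_G(O_2(G))$ will be normal in $G$; since $C = H \cap C_G(O_2(G))$ is a Hall $2$-complement of $C_G(O_2(G))$, a normal $2$-complement there is automatically unique (and equal to $C$), and therefore characteristic.

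First I would reduce to the case $O_2(G) > 1$; if $O_2(G) = 1$ then $C = H$ and the assertion is handled separately or vacuously in the applications. Given $O_2(G) > 1$, the unique involution $z$ of the generalized quaternion Sylow $2$-subgroup $P$ must lie in $O_2(G)$ (every nontrivial subgroup of $P$ contains $z$), and since $\langle z \rangle$ is characteristic in $O_2(G)$, we conclude $z \in Z(G)$.

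Next I would analyze the Sylow $2$-subgroup $P_1 := P \cap C_G(O_2(G)) = C_P(O_2(G))$ of $C_G(O_2(G))$. A structural fact about generalized quaternion groups is that the centralizer in $P$ of any non-central element is cyclic; consequently $P_1$ is cyclic unless $O_2(G) \le Z(P) = \langle z\rangle$, in which case $O_2(G) = \langle z\rangle$ and $P_1 = P$. If $P_1$ is cyclic, then $C_G(O_2(G))$ has a cyclic Sylow $2$-subgroup together with the central element $z$ of $2$-power order, so Lemma \ref{tidy cyclic 1} produces the desired normal $2$-complement.

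The principal remaining case is $O_2(G) = \langle z\rangle \le Z(G)$, so that $C_G(O_2(G)) = G$, and one must show that $G$ itself has a normal $2$-complement. Here Lemma \ref{Lemma 1.19.} yields $G = {\rm Cyc}_G(z)$, so every element of $G$ generates a cyclic subgroup together with $z$. Combining this with tidiness applied to elements of prime power order and with the dichotomy of Lemma \ref{Lemma 1.6.} (which constrains how odd-order elements can act on $O_2(G) = \langle z\rangle$), one would show that the odd-order elements of $G$ form a subgroup, the desired normal $2$-complement.

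The main obstacle is this last case. Unlike the exponent-$p$ situation of Lemma \ref{tidy exp p 2}, where Lemma \ref{tidy exp p 1} supplied the normal $p$-complement of $C_G(z)$ directly, no such one-line lemma is available for generalized quaternion Sylow $2$-subgroups: $\text{SL}_2(3)$ already witnesses a tidy group with a generalized quaternion Sylow $2$-subgroup and a central involution but no normal $2$-complement (though of course it has $O_2(G) > \langle z\rangle$). The proof must therefore exploit the minimality $O_2(G) = \langle z \rangle$ together with the full strength of tidiness on mixed-order elements to close the gap.
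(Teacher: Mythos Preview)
Your reduction to showing that $C_G(O_2(G))$ has a normal $2$-complement, and your argument that $P_1 = C_P(O_2(G))$ is cyclic whenever $O_2(G)$ strictly contains $Z(P)$, are correct and essentially match the paper's case analysis.

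The gap is exactly where you flag it: the case $|O_2(G)| = 2$. Your proposed tools give no traction here. When $O_2(G) = \langle z\rangle$, every element of $G$ centralizes $O_2(G)$ automatically (since ${\rm Aut}(Z_2) = 1$), so Lemma~\ref{Lemma 1.6.} applied to $O_2(G)$ is vacuous; and Lemma~\ref{Lemma 1.19.} only tells you $G = {\rm Cyc}_G(z)$, which does not by itself control the normality of $H$. The paper resolves this case by invoking the preceding Lemma~\ref{Lemma 1.10.}, whose proof pivots away from $O_2(G)$ entirely: using solvability one has $O_{2'}(G) > 1$, and Lemma~\ref{Lemma 1.6.} applied to each $O_{p_i}(G)$ with $p_i$ odd forces $T/C_T(O_{p_i}(G))$ to be cyclic, whence $T' \le \bigcap_i C_T(O_{p_i}(G)) = C_T(F^{2'}(G)) = Z$. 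Thus $T$ has nilpotence class~$2$, so $T \cong Q_8$, and then Burnside's normal $p$-complement theorem (applied to the quotients $G/C_G(O_{p_i}(G))$, each of which has Sylow $2$-subgroup of order at most~$2$) shows that $HZ$, and hence $H$, is normal in $G$. The idea missing from your sketch is to study the action of $T$ on the \emph{odd} part of $F(G)$ rather than on $O_2(G)$ itself.
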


\begin{proof} 
Since $C_G (O_2(G))$ is normal in $G$, we see that $C$ is normal in $H$.  It follows that $C$ is a Hall $2$-complement of $C_G (O_2 (G))$, and $G/C_G (O_2(G))$ is isomorphic to a subgroup of ${\rm Aut} (O_2 (G))$.  We know that $O_2 (G)$ is either generalized quaternion or cyclic. If $O_2 (G)$ is not the quaternion group, then ${\rm Aut} (O_2(G))$ is a $2$-group.  Let $T$ be a Sylow $2$-subgroup of $G$, and observe that $T \cap C_G (O_2(G))$ is a Sylow $2$-subgroup of $C_G (O_2 (G))$. In particular, $O_2 (G)$ is a normal subgroup of $T$. Now, $T$ has a cyclic subgroup $D$ of index $2$.  If $O_2 (G)$ is generalized quaternion, then $O_2 (G)$ will contain elements outside of $D$, and it will intersect $D$ in a subgroup of order at least $4$.  Observe that if $x \in T \setminus D$, then $C_T (x) = \langle x \rangle$, and if $x \in D$, then $C_T (x) = D$. We deduce that if $O_2 (G)$ is generalized quaternion, then $C_T (O_2 (G)) = Z(T)$. 

Since $O_2 (G)$ is normal in $T$, if $O_2 (G)$ is cyclic, then either $O_2 (G) \leq D$ or $C_T (O_2(G)) = O_2 (G)$. Hence, when $O_2(G)$ is cyclic of order at least $4$, either $C_T (O_2 (G)) = D$ or $C_T (O_2 (G)) = O_2(G)$. Thus, when $O_2 (G)$ is generalized quaternion or cyclic of order at least $4$, the Sylow $2$-subgroup of $C_G (O_2 (G))$ is cyclic. By Lemma \ref{tidy cyclic 1}, $C_G (O_2 (G))$ has a normal $2$-complement. This implies that $C$ is normal in $C_G (O_2 (G))$ and hence characteristic in $C_G (O_2 (G))$. Since $C_G (O_2 (G))$ is normal in $G$, we deduce that $C$ is a normal subgroup of $G$.  The remaining possibility is that $O_2 (G)$ has order $2$. This implies that $C = H$. By Lemma \ref{Lemma 1.10.}, we conclude that $H$ is normal in $G$, and the result is proved.
\end{proof}

In this next lemma, we classify the groups where a Sylow $2$-subgroup is generalized quaternion and there is a central element of order $2$.  This is a key step in the general classification.

\begin{lemma}\label{Lemma 1.11.}
Let $G$ be a solvable, tidy group. Suppose a Sylow $2$-subgroup of $G$ is generalized quaternion. If $x \in Z(G)$ has order $2$, then one of the following occurs:
\begin{enumerate}
\item $G$ has a normal $2$-complement.
\item $O_2 (G)$ is the quaternion group of order $8$ and if $H$ is a Hall $\{ 2, 3\}$-subgroup of $G$, then $H/O_3 (H)$ is isomorphic to ${\rm SL}_2 (3)$ or $\widetilde{{\rm GL}_2 (3)}$ and there is a Hall $\{ 2, 3\}$-complement $T$ so that $O_3(H) T$ is a normal subgroup of $G$.
\end{enumerate}	
\end{lemma}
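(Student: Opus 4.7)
The plan is to do a case analysis on $O_2(G)$. Since $x \in Z(G)$ has order $2$ and a generalized quaternion group has a unique involution, $\langle x\rangle$ is the center of any Sylow $2$-subgroup $P$ of $G$, so $\langle x\rangle \le O_2(G)$ and in particular $O_2(G) \ne 1$. As a subgroup of the generalized quaternion group $P$, $O_2(G)$ is itself cyclic or generalized quaternion. The essential dichotomy is whether $O_2(G) \cong Q_8$: the automorphism group of every other cyclic or generalized quaternion group of order at least $2$ is a $2$-group, whereas $\mathrm{Aut}(Q_8) \cong S_4$ admits odd-order action.

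I would handle the easy cases first. If $|O_2(G)| = 2$, Lemma~\ref{Lemma 1.10.} immediately gives conclusion (1). If $|O_2(G)| \ge 4$ and $O_2(G) \not\cong Q_8$, then $\mathrm{Aut}(O_2(G))$ is a $2$-group, so every Hall $2$-complement of $G$ lies in $C := C_G(O_2(G))$; as in the proof of Lemma~\ref{Lemma 1.12.}, the Sylow $2$-subgroup of $C$ is cyclic, and since $x \in Z(C)$ has order $2$, Lemma~\ref{tidy cyclic 1} provides a normal $2$-complement $K$ of $C$. This $K$ is characteristic in $C \triangleleft G$, hence the normal $2$-complement of $G$.

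The substantive case is $O_2(G) = Q_8$. Set $C = C_G(Q_8)$. Since $P$ normalizes $Q_8$ and $C_P(Q_8) = Z(Q_8) = \langle x\rangle$ (a standard calculation for $P$ generalized quaternion containing $Q_8$), the embedding $P/\langle x\rangle \hookrightarrow \mathrm{Aut}(Q_8) \cong S_4$ forces $|P| \le 16$, so $P \in \{Q_8, Q_{16}\}$. Then $G/C$ is a subgroup of $S_4$ containing the normal $V_4$ (image of $Q_8$) with Sylow $2$-subgroup $P/\langle x\rangle$, giving $G/C \in \{V_4, A_4\}$ if $P = Q_8$ and $G/C \in \{D_8, S_4\}$ if $P = Q_{16}$. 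If $G/C$ is a $2$-group, the cyclic Sylow $2$-subgroup $\langle x \rangle$ of $C$ and centrality of $x$ again invoke Lemma~\ref{tidy cyclic 1} to yield conclusion (1). Otherwise $G/C \in \{A_4, S_4\}$; let $H$ be a Hall $\{2,3\}$-subgroup of $G$. Since $\mathrm{Aut}(Q_{16})$ is a $2$-group, $O_2(H) = Q_8$ (else a Sylow $3$-subgroup of $H$ would centralize $Q_{16}$, killing the required nontrivial $3$-action). Setting $\bar H = H/O_3(H)$, coprimality gives $O_3(\bar H) = 1$ and $O_2(\bar H) = Q_8$, so $F(\bar H) = Q_8$; Fitting's theorem yields $\bar H/\langle x\rangle \hookrightarrow S_4$, and matching Sylow $2$-subgroups pins $\bar H/\langle x\rangle$ as $A_4$ or $S_4$ according to $P = Q_8$ or $Q_{16}$. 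Since the image of $x$ is central of order $2$ in $\bar H$, $\bar H$ is a central extension of $A_4$ or $S_4$ by $Z_2$ with Sylow $2$-subgroup $Q_8$ or $Q_{16}$; the unique such extensions are $\mathrm{SL}_2(3)$ and $\widetilde{\mathrm{GL}_2(3)}$, respectively.

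Finally, for the existence of a Hall $\{2,3\}$-complement $T$ with $O_3(H) T$ normal in $G$, Lemma~\ref{tidy cyclic 1} applied to $C$ (Sylow $2$ equals $\langle x\rangle$) produces a normal $2$-complement $K$ of $C$, which is characteristic in $C$ hence normal in $G$. By Lemma~\ref{Lemma 1.6.}, any element of order coprime to $6$ centralizes $Q_8$, so any Hall $\{2,3\}$-complement $T$ of $G$ lies in $K$; and $O_3(H)$ centralizes $Q_8$ by coprimality, so $O_3(H) \le K$. An order count identifies $O_3(H)$ as a Sylow $3$-subgroup of $K$, whence $O_3(H) T = K$ is normal in $G$. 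I expect the principal obstacle to be the identification of $\bar H$ with $\mathrm{SL}_2(3)$ or $\widetilde{\mathrm{GL}_2(3)}$: recognizing these as the unique central extensions of $A_4$ and $S_4$ by $Z_2$ with the stated Sylow $2$-structure, and then correctly extracting the Hall $\{2,3\}$-complement sitting inside the normal subgroup $K$.
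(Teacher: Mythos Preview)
Your proof is correct and follows essentially the same case analysis on $O_2(G)$ as the paper, invoking the same lemmas (\ref{Lemma 1.10.}, \ref{tidy cyclic 1}, \ref{Lemma 1.6.}) at the same junctures; the only cosmetic difference is that in the $Q_8$ case the paper identifies $H\cap C = O_3(H)$ directly and uses $H/C_H(O_2(G))\cong G/C_G(O_2(G))$, whereas you route through Fitting's theorem on $\bar H$. Two justifications labeled ``coprimality'' should be tightened: $O_2(\bar H)=Q_8$ is not a formal consequence of coprimality (repeat the $\mathrm{Aut}(Q_{16})$ argument you already gave for $O_2(H)$), and $O_3(H)$ centralizes $Q_8$ not merely by coprimality but because $O_3(H)$ and $O_2(H)=Q_8$ are both normal in $H$, forcing $[O_3(H),Q_8]\le O_3(H)\cap Q_8=1$.
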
 

\begin{proof} 
Since $G$ has a central element of order $2$, we must have that $O_2 (G) > 1$. If $|O_2 (G)| = 2$, then by Lemma \ref{Lemma 1.10.}, we see that $G$ has a normal $2$-complement. Suppose that $O_2 (G)$ is cyclic or is generalized quaternion of order at least $16$.  We know that $C_G (O_2(G))$ is normal in $G$ and $G/C_G (O_2 (G))$ is isomorphic to a subgroup of ${\rm Aut} (O_2 (G))$.  This implies that $G/C_G (O_2 (G))$ is a $2$-group.  By Lemmas \ref{tidy cyclic 1} or \ref{Lemma 1.12.}, $C_G (O_2 (G))$ has a
normal $2$-complement $C$.  Observe that $C$ will be characteristic in $G_G (O_2 (G))$ and hence it is normal in $G$.  Since $|G:C_G (O_2 (G))|$ is a $2$-power $C$ is a $2$-complement of $G$.
  
Thus, we are left with the case that $O_2 (G)$ is the quaternion group of order $8$. We know that $G/C_G (O_2 (G))$ is isomorphic to a subgroup of ${\rm Aut} (O_2 (G)) \cong S_4$. Notice that the Sylow $2$-subgroup of $C_G (O_2 (G))$ will be $Z (O_2 (G))$ which has order $2$, and $$\frac {O_2 (G)C_G (O_2 (G))}{C_G (O_2 (G))} \cong Z_2 \times Z_2.$$ Thus, there are three possibilities here. In particular, $G/C_G (O_2(G))$ is isomorphic to (1) a Klein $4$-group, (2) $A_4$, or (3) $S_4$.  We may use Lemma \ref{tidy cyclic 1} to see that $C_G (O_2 (G))$ has a normal $2$-complement $C$.  In the first case, it is not difficult to see that $C$ is a normal $2$-complement of $G$. 

We assume that we are in cases (2) or (3). Let $H$ be a Hall $\{ 2, 3 \}$-subgroup of $G$ and take $T$ to be a Hall $\{ 2,3 \}$-complement of $C$. Observe that $H \cap C$ will be a $3$-subgroup, will be normal in $H$ and will be a Sylow $3$-subgroup of $C$. Also, $C_H (O_2 (G)) = Z (O_2 (G)) \times (H \cap C)$. We know that $H/C_H (O_2 (G)) \cong G/C_G (O_2 (G))$ is either $A_4$ or $S_4$. In either case, we see that $H \cap C = O_3 (H)$ and $C = O_3(H) T$. Since a Sylow $2$-subgroup of $H$ is either the quaternion
group of order $8$ or the generalized quaternion group of order $16$, we see that $H/O_3 (H)$ is isomorphic to either ${\rm SL}_2 (3)$ or $\widetilde{{\rm GL}_2 (3)}$. Therefore, we obtain conclusion (2).
\end{proof} 

We also need to understand the centralizer when a Sylow subgroup is cyclic.  We obtain additional information in this case.
We define the terms of the upper central series as follows.  Set $Z_0 (G) = 1$ and for $i \ge 1$, define $Z_i (G)/Z_{i-1} (G) = Z (G/Z_{i-1} (G))$.  It is easy to see that $Z_1 (G) = Z(G)$ and $Z_0 (G) \le Z_1 (G) \le \dots$.  Since $G$ is finite, there is an integer $N$ so that $Z_n (G) = Z_N (G)$ for all $n \ge N$.  We say that $Z_N (G)$ is the {\it hyper-center} of $G$, and we denote it by $Z_\infty (G)$. 

\begin{lemma} \label{Lemma 1.5}
Let $G$ be a tidy group and let $p$ be a prime.   Suppose that $G$ has a Sylow $p$-subgroup $P$ that is cyclic and let $H$ be a Hall $p$-complement of $G$.  If $O_p (G) > 1$ and $C = C_H (O_p(G))$, then $C$ is normal in $G$. In addition, if $C < H$, then $G/C$ is a Frobenius group with Frobenius kernel $P \cong PC/C$ and Frobenius complement $H/C$ that is cyclic of order dividing $p-1$, $P \cap Z_{\infty}(G) = 1$.
\end{lemma}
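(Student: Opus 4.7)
The plan is to follow the strategy of the proof of Lemma \ref{tidy exp p 2}, with Lemma \ref{tidy cyclic 1} taking the role played there by Lemma \ref{tidy exp p 1}. First, since $P$ is abelian, we have $P \le C_G(O_p(G))$, so $P$ is actually a Sylow $p$-subgroup of $C_G(O_p(G))$. I would then fix the unique subgroup $\langle z \rangle$ of order $p$ inside the cyclic group $O_p(G)$; then $z \in Z(C_G(z))$ and $C_G(z)$ has the cyclic Sylow $p$-subgroup $P$, so Lemma \ref{tidy cyclic 1} supplies a normal $p$-complement $K_0$ of $C_G(z)$. Intersecting with $C_G(O_p(G))$ and noting that $C_G(O_p(G))/(K_0 \cap C_G(O_p(G)))$ embeds into the $p$-group $C_G(z)/K_0$ produces a normal $p$-complement $K$ of $C_G(O_p(G))$. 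Since $K$ is characteristic in the normal subgroup $C_G(O_p(G)) \trianglelefteq G$, it is normal in $G$, and a short order count identifies $C = H \cap C_G(O_p(G))$ with $K$, settling the first claim.

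Now assume $C < H$. Because $C = K$ is the normal $p$-complement of $C_G(O_p(G)) = KP$, we immediately have $PC = C_G(O_p(G)) \trianglelefteq G$, and Dedekind's lemma (applied with $C \le H$, using $H \cap P = 1$) gives $PC \cap H = C$, so $G/C = (PC/C) \rtimes (H/C)$ with $PC/C \cong P$. The central task is to show that $H/C$ acts fixed-point-freely on $PC/C$. Since $O_p(G)$ is cyclic, case (3) of Lemma \ref{Lemma 1.6.} is ruled out, so each $x \in H \setminus C$ falls under case (2) and therefore acts fixed-point-freely on $O_p(G) \setminus \{1\}$. The induced automorphism of $PC/C \cong P$ by conjugation by $x$ restricts, on $O_p(G)$, to this very same action (because $xO_p(G)x^{-1} = O_p(G) \le P$, so the $C$-component vanishes), and a fixed point in $P \setminus \{1\}$ would produce a fixed point in the unique subgroup of order $p$ inside $P$, which coincides with the socle of the cyclic group $O_p(G)$ --- contradicting the Frobenius action. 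Hence $G/C$ is a Frobenius group with kernel $PC/C \cong P$ and complement $H/C$.

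From here the remaining conclusions drop out. A fixed-point-free subgroup of $\aut{P}$ for cyclic $P$ of order $p^k$ must intersect trivially the Sylow $p$-subgroup of $\aut{P}$, namely the automorphisms acting as the identity on the socle $\Omega_1(P)$; reduction modulo $\Omega_1(P)$ therefore embeds $H/C$ into $\aut{\Omega_1(P)} \cong Z_{p-1}$, yielding a cyclic group of order dividing $p-1$. (The case $p=2$ never actually arises under $C<H$, since $\aut{P}$ is a $2$-group while $H/C$ has $2'$-order, so $C<H$ forces $p$ odd.) For the hypercenter assertion, a Frobenius group with nontrivial complement has trivial center, hence trivial hypercenter; the inclusion $Z_\infty(G)C/C \le Z_\infty(G/C) = 1$ then places $Z_\infty(G)$ inside $C$, and since $C$ is a $p'$-group, $P \cap Z_\infty(G) \le P \cap C = 1$.

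The main obstacle, as I see it, is the upgrade from ``Frobenius on $O_p(G)$'' (which Lemma \ref{Lemma 1.6.} hands us directly) to ``Frobenius on all of $P$'' (which is what the statement demands). The cyclic-socle trick of the second paragraph --- using that the unique subgroup of order $p$ in $P$ already lies in $O_p(G)$ --- is the bridge that makes that step cheap; once it is in place, the rest is a routine adaptation of the pattern established by Lemma \ref{tidy exp p 2}.
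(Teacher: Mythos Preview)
Your proof is correct and follows the same overall arc as the paper's: show $C$ is the normal $p$-complement of $C_G(O_p(G))$ (hence characteristic there, hence normal in $G$), identify $PC=C_G(O_p(G))$ as a normal subgroup, establish the Frobenius action first on $O_p(G)$ and then extend to all of $PC/C\cong P$, and read off the remaining statements. The tactical choices differ in a few places. The paper applies Lemma~\ref{tidy cyclic 1} directly to $C_G(x)$ with $x$ a \emph{generator} of $O_p(G)$ (so $C_G(x)=C_G(O_p(G))$), avoiding your detour through $C_G(z)$ for $z$ of order $p$. For the Frobenius step, the paper argues via $H/C\hookrightarrow\aut{O_p(G)}$ and the fact that any coprime automorphism of a cyclic $p$-group is fixed-point-free, and then passes from $O_p(G)$ to $P$ by Fitting's decomposition together with the indecomposability of the cyclic group $PC/C$; your route through Lemma~\ref{Lemma 1.6.} followed by the socle trick achieves the same end and, as you noticed, makes the paper's separate Sylow-counting argument for normality of $PC/C$ visibly redundant (normality is immediate from $PC=C_G(O_p(G))$). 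Your hypercenter argument via $Z_\infty(G)C/C\le Z_\infty(G/C)=1$ is a bit tighter than the paper's, which first shows $Z(G)\cap P=1$ and then asserts the hypercenter conclusion. One small slip of wording: what you call ``reduction modulo $\Omega_1(P)$'' is really \emph{restriction to} $\Omega_1(P)$; the conclusion is unaffected.
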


\begin{proof}
Notice that $C_G (O_p (G))$ is normal in $G$, so $H \cap C_G (O_p (G)) = C$ is a Hall $p$-complement of $C_G (O_p (G))$.  
There is an element $x$ so that $O_p (G) = \langle x \rangle$.  Then $C_G (x) = C_G (O_p (G))$.  By Lemma \ref{tidy cyclic 1}, $C_G (x)$ has a normal $p$-complement.  This implies that $C$ is normal and hence, characteristic in $C_G (O_p (G))$.  Since $C_G (O_p (G))$ is normal in $G$, we conclude that $C$ is normal in $G$.
	
	
Suppose $C < H$.  Observe that $P C = C_G (O_p (G))$. Also, $H/C \cong G/C_G (O_p (G))$ is isomorphic to a subgroup of ${\rm Aut} (O_p (G))$.  Since $O_p (G)$ is cyclic of prime power order and $|H:C|$ is coprime to $p$, we conclude that $H/C$ is cyclic of order dividing $p-1$.  (Recall that the automorphism group of a cyclic $2$-group is a $2$-group and when $p$ is odd, the automorphism group of a cyclic group of order $p^a$ is cyclic of order $\phi (p^a) = p^{a-1}(p-1)$.)  It is not difficult to see that any nontrivial automorphism of coprime order of a cyclic group of prime power order will be fixed point-free.  (This strongly uses the fact that we have prime power order; it is not true if you do not assume this.)  Thus, $O_p(G)H/C$ is a Frobenius group with Frobenius kernel $O_p (G)C/C$ and Frobenius complement $H/C$.    This implies $Z(G) \cap O_p (G) = 1$.  Notice that $Z (G) \cap P$ is a normal $p$-subgroup of $G$, and so, we deduce that $Z(G) \cap P \le O_p (G)$ and so, $Z(G) \cap P = 1$.  This implies that $Z_{\infty} (G) \cap P = 1$.
	
Note that $|G:PC| = |H:C| = |H:O_p (G)C|$ divides $p-1$.  Since the number of Sylow $p$-subgroups of $G/C$ needs to be congruent to $1$ modulo $p$, we deduce that $PC/C$ is normal in $G/C$.  Hence, we have that $PC$ is normal in $G$.  By Fitting's Lemma, we see that $H/C$ acts Frobeniusly on $PC/C$.
\end{proof}

We also make use of the following technical lemma about hypercenters.

\begin{lemma} \label{center}
Let $G$ be a tidy solvable group and let $p$ be a prime.  Suppose that $G$ has a Sylow $p$-subgroup $P$ and let $H$ be a Hall $p$-complement of $G$.  If $O_p (G) > 1$ and $H \le C_G (O_p (G))$, then $Z (G) \cap P = Z (P) \cap (O_p (G)) > 1$.  In particular, $O_p (G) = Z_{\infty} (G) \cap P$.
\end{lemma}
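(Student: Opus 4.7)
The plan is to prove the two equalities separately. For $Z(G) \cap P = Z(P) \cap O_p(G)$, I argue by double inclusion. Since $|G| = |P||H|$ and $P \cap H = 1$, we have $G = PH$. An element of $Z(P) \cap O_p(G)$ is centralized by $P$ (as it lies in $Z(P)$) and by $H$ (because $H \le C_G(O_p(G))$), hence it is central in $G$ and clearly lies in $P$. Conversely, $Z(G) \cap P$ is a normal $p$-subgroup of $G$, so it lies in $O_p(G)$, and it is trivially in $Z(P)$. Nontriviality is immediate: $O_p(G)$ is a nontrivial normal subgroup of the $p$-group $P$, so it meets $Z(P)$ nontrivially.

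For the ``in particular'' claim, set $K_0 = O_p(G)$ and $K_{i+1} = [K_i, G]$; each $K_i$ is normal in $G$. Writing a typical element of $G$ as $g = ph$ with $p \in P$ and $h \in H$, the commutator identity $[k, ph] = [k, h] [k, p]^h$ together with $[K_i, H] = 1$ (since $K_i \le O_p(G)$) and the observation that $H$ centralizes $[k, p] \in K_i$ yields $[K_i, G] = [K_i, P]$. Since $K_i$ is a normal subgroup of the nilpotent group $P$, the standard fact that $[N, L] < N$ whenever $N$ is a nontrivial normal subgroup of a nilpotent group $L$ gives $K_{i+1} < K_i$ while $K_i > 1$. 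The chain $K_0 > K_1 > \cdots$ therefore descends strictly to $1$, and by construction each $K_i / K_{i+1}$ is central in $G/K_{i+1}$, so $O_p(G) \le Z_n(G) \le Z_\infty(G)$ for some $n$. Combined with $O_p(G) \le P$, this yields $O_p(G) \le Z_\infty(G) \cap P$.

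For the reverse containment, $Z_\infty(G)$ is itself nilpotent (the upper central series is a central series), so its unique Sylow $p$-subgroup is characteristic in $Z_\infty(G)$ and hence normal in $G$; this Sylow subgroup is therefore contained in $O_p(G)$ and contains $Z_\infty(G) \cap P$, which completes the equality. The main obstacle is the identity $[K_i, G] = [K_i, P]$, which rests on the hypothesis $H \le C_G(O_p(G))$; once this is in hand the descending-chain argument and the nilpotency of $Z_\infty(G)$ do the rest, and neither tidiness nor solvability of $G$ is used beyond guaranteeing that the Hall $p$-complement $H$ exists.
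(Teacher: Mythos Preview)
Your argument is correct. The first paragraph, establishing $Z(G)\cap P = Z(P)\cap O_p(G) > 1$ by double inclusion, is essentially identical to the paper's proof.

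For the hypercenter equality you take a different route. The paper proves by induction on $n$ that $Z_n(G)\cap P = Z_n(P)\cap O_p(G)$ for all $n\ge 1$, working up the upper central series and using the inductive hypothesis in both directions of the containment; since $O_p(G)\le Z_n(P)$ for large $n$, the claim follows. You instead work \emph{down} a commutator series $K_0=O_p(G)$, $K_{i+1}=[K_i,G]$, use the factorization $G=PH$ and the hypothesis $H\le C_G(O_p(G))$ to reduce $[K_i,G]$ to $[K_i,P]$, and then invoke nilpotency of $P$ to force the chain to terminate at $1$; this gives $O_p(G)\le Z_\infty(G)$ directly. For the reverse inclusion you appeal to the nilpotency of $Z_\infty(G)$ and the characteristic nature of its Sylow $p$-subgroup. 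Both approaches are sound; the paper's yields the finer stage-by-stage description $Z_n(G)\cap P = Z_n(P)\cap O_p(G)$, while yours is shorter and makes transparent that only the factorization $G=PH$ and the centralizing hypothesis are needed. Your closing remark that tidiness plays no role beyond furnishing the Hall $p$-complement is accurate.
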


\begin{proof}
Observe that $G = PH$.  Since $O_p (G)$ is a nontrivial normal subgroup of $P$, we have that $Z(P) \cap O_p (G) > 1$. Observe that both $P$ and $H$ centralize $Z(P) \cap O_p (G)$, so $Z(P) \cap O_p (G) \le Z(G) \cap P$.  On the other hand, $Z(G) \cap P \le P$, so $Z(G) \cap P \le Z(P)$ and $Z(G) \cap P$ is a normal $p$-subgroup of $G$, so $Z(G) \cap P \le O_p (G)$.  We obtain $Z(G) \cap P \le Z(P) \cap O_p (G)$.  We deduce that $Z(G) \cap P = Z(P) \cap O_p (G)$. 
	
We claim that $Z_n (G) \cap P = Z_n (P) \cap O_p (G)$ for $n \ge 1$.  We have proved this for $n = 1$.  Suppose we know it is true for $m \ge 1$, and we now prove it is true for $m + 1$.  Since $H$ centralizes $O_p (G)$, it follows that $HZ_m (G)$ will centralize $(Z_{m+1} (P) \cap O_p (G))Z_m (G)/Z_m (G)$.  It is obvious that $PZ_m (G)/Z_m (G)$ centralizes $(Z_{m+1} (P) \cap O_p (G))Z_m (G)/Z_m (G)$.  We have that $Z_{m+1} (P) \cap O_p (G) \le Z_{m+1} (G) \cap P$.  On the other hand, it is easy to see that $Z_{m+1} (G) \cap P$ is contained in $Z_{m+1} (P)$ and it is a $p$-group that is normal in $G$, so we have proved the claim.  Since $O_p (G) \le Z_n (P)$ for some $n$, we have $O_p (G) \le Z_n (G) \le Z_{\infty} (G)$ as desired.
\end{proof}

We now arrive at the main theorem of this section, the classification of tidy $\{ p, q \}$-groups.

\begin{theorem} \label{Lemma 1.17.}
Suppose $G$ is a $\{ p, q \}$-group for distinct primes $p$ and $q$. Then $G$ is tidy if and only if $G$ has tidy Sylow $p$- and Sylow $q$-subgroups and one of the following occurs:
\begin{enumerate}
\item $G$ is nilpotent.
\item Up to relabeling $p$ and $q$, $Z_\infty (G)$ is a $q$-group and $G/Z_\infty (G)$ is a Frobenius group whose Frobenius kernel is the Sylow $p$-subgroup.
\item $\{ p, q \} = \{ 2, 3\}$, $O_2 (G)$ is a Klein $4$-group, $G/O_3(G) \cong S_4$ and $G/O_2 (G)$ is a Frobenius group whose Frobenius kernel is the Sylow $3$-subgroup of $G/O_2 (G)$ and whose Frobenius complement has order $2$. Also, $Z(G) = 1$.
\item $\{ p, q \} = \{ 2, 3\}$, $O_2 (G)$ is a Sylow $2$-subgroup of $G$ and is the quaternion group of order $8$, $G/O_3 (G) \cong {\rm SL}_2 (3)$. Also, $Z_\infty (G) = Z(O_2 (G)) \times O_3(G)$.
\item $\{ p, q \} = \{ 2, 3\}$, $O_2 (G)$ is the quaternion group of order $8$, $G/O_3(G) \cong \widetilde{{\rm GL}_2 (3)}$ and $G/O_2 (G)$ is a Frobenius group whose Frobenius kernel is the Sylow $3$-subgroup of $G/O_2 (G)$ and whose Frobenius complement has order $2$. Additionally, $Z_\infty (G) = Z(G) = Z(O_2(G))$.
\end{enumerate}
\end{theorem}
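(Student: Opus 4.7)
The forward direction assembles pieces already in hand. Case~(1) is the statement from \cite{cycels} that a nilpotent group whose Sylow subgroups are tidy is itself tidy. For case~(2), I would apply Theorem~\ref{Lemma 1.26.} with $N = Z_\infty(G)$: since $F(G) = O_p(G) \times O_q(G)$ and the Frobenius kernel $F(G)/N$ of $G/N$ is forced to be a Sylow $p$-subgroup, we obtain $O_p(G) = P$ and $O_q(G) = Z_\infty(G) = N$, so $N$ is a Hall $q$-subgroup of $F(G)$; the preimage $H$ of the Frobenius complement is then itself a $q$-group, hence nilpotent, and hypothesis~(1) of Theorem~\ref{Lemma 1.26.} applies. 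Cases~(3)--(5) follow directly from Theorem~\ref{Lemma 1.27.}.

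For the reverse direction, suppose $G$ is a tidy $\{p,q\}$-group that is not nilpotent. By Burnside's $p^aq^b$-theorem, $G$ is solvable, so $F(G) > 1$; since subgroups of tidy groups are tidy, Theorem~\ref{p-groups} classifies the Sylow subgroups. Without loss of generality $O_p(G) > 1$. Let $P$ be a Sylow $p$-subgroup, $H$ a Hall $p$-complement, and $C = C_H(O_p(G))$. According to whether $P$ has exponent $p$, is cyclic, is dihedral, or is generalized quaternion, one of Lemmas~\ref{tidy exp p 2}, \ref{Lemma 1.5}, \ref{Lemma 1.8.}, or \ref{Lemma 1.12.} shows that $C \trianglelefteq G$.

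The argument then splits on $C < H$ versus $C = H$. If $C < H$, these same lemmas together with Lemma~\ref{Lemma 1.6.} yield a Frobenius action of $H/C$ on $O_p(G)$, except in the exceptional situations of Lemma~\ref{Lemma 1.6.}(3) and Lemma~\ref{Lemma 1.8.}. In the generic Frobenius case, the hypothesis together with Lemma~\ref{center} forces $P$ to centralize $O_q(G)$ (a nontrivial $P$-action would produce a competing Frobenius structure on the $q$-side, incompatible with $C < H$) and identifies $Z_\infty(G)$ with the $q$-part of $F(G)$; this gives case~(2). When $\{p,q\} = \{2,3\}$ and the action fails to be Frobenius, Lemmas~\ref{Lemma 1.8.} and \ref{Lemma 1.11.} exhibit quotients isomorphic to $S_4$, ${\rm SL}_2(3)$, or $\widetilde{{\rm GL}_2(3)}$, and Lemma~\ref{Lemma 1.13.} applied to the resulting $2$-Frobenius subquotient pins down the structure of $F(G)$ together with the hypercenter and center data; this yields cases~(3), (4), and (5) respectively. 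If instead $C = H$, Lemma~\ref{center} gives $O_p(G) = Z_\infty(G) \cap P$; running the same analysis with $p$ and $q$ swapped, either $P$ also centralizes $O_q(G)$ (so $G = Z_\infty(G)$ is nilpotent, contradicting hypothesis) or the labels interchange and we land in case~(2) with the roles of $p$ and $q$ reversed.

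The main obstacle is the bookkeeping in the exceptional cases~(3)--(5): isolating $O_3(G)$ as the $3$-core, matching the Sylow $2$-subgroup to exactly $V_4$, $Q_8$, or the generalized quaternion of order~$16$ acting correctly on a Frobenius $3$-kernel, and then verifying the precise equalities $Z(G) = 1$, $Z_\infty(G) = Z(O_2(G)) \times O_3(G)$, and $Z_\infty(G) = Z(G) = Z(O_2(G))$. This requires combining the Gorenstein--Walter input packaged in Lemma~\ref{Lemma 1.8.} with the $2$-Frobenius classification of Lemma~\ref{Lemma 1.13.} to produce exactly the three listed extensions and to rule out any others.
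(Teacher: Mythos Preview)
Your overall architecture matches the paper's: the converse direction via Theorems~\ref{Lemma 1.26.} and~\ref{Lemma 1.27.} is exactly right, and the forward direction correctly begins by splitting on the shape of $P$ and invoking Lemmas~\ref{tidy exp p 2}, \ref{Lemma 1.5}, \ref{Lemma 1.8.}, and~\ref{Lemma 1.12.} to get $C \trianglelefteq G$. However, there is a genuine gap in your ``generic Frobenius case,'' and you have Lemma~\ref{Lemma 1.13.} in the wrong place.

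When $P$ is cyclic or of exponent $p$ and $C = O_q(G) < Q$, the crux is to prove $P = O_p(G)$; only then does case~(2) follow. Your parenthetical---that a nontrivial $P$-action on $O_q(G)$ would be ``incompatible with $C < H$''---is false as stated: $S_4$ itself has $Q/O_3(G)$ acting Frobeniusly on $O_2(G) = V_4$ \emph{and} the outer involutions acting Frobeniusly on the Sylow $3$-subgroup, so competing Frobenius structures can perfectly well coexist with $C < H$. Nor does ``$P$ centralizes $O_q(G)$'' by itself yield case~(2), since you still have not shown that $P$ is normal. The paper's argument at this point is substantially more delicate: one assumes $O_p(G) < P$, chooses a chief factor $N/O_p(G)O_q(G)$, and, according as this factor is a $q$-group or a $p$-group, uses a Frattini argument together with centralizer computations (via Lemmas~\ref{tidy exp p 1} and~\ref{tidy cyclic 1}) to manufacture a $2$-Frobenius quotient $NP/O_q(G)$. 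It is \emph{here} that Lemma~\ref{Lemma 1.13.} enters, forcing a dihedral Sylow $2$-subgroup of order~$8$ and contradicting the standing assumption that $P$ is cyclic or of exponent~$p$. By contrast, the exceptional cases~(3)--(5) are read off directly from Lemmas~\ref{Lemma 1.8.} and~\ref{Lemma 1.11.} combined with Lemma~\ref{Lemma 1.6.} and Lemma~\ref{center}; Lemma~\ref{Lemma 1.13.} plays no role there.
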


\begin{proof} 
We begin by assuming that $G$ is a tidy group. This implies that $G$ has tidy Sylow $p$- and Sylow $q$-subgroups $P$ and $Q$ respectively. If both $P = O_p (G)$ and $Q = O_q (G)$, then $G = P × Q$ and $G$ is nilpotent and we have (1). We now assume that $G$ is not nilpotent; so both $P > 1$ and $Q > 1$ and either $O_p (G) < P$ or $O_q (G) < Q$. Without loss of generality, we assume $O_q (G) < Q$. Notice that if $O_p(G) = 1$, then $O_p(G) < P$. Thus, we may assume that $O_p (G) > 1$ and $O_q (G) < Q$.  Furthermore, if both $O_p (G) > 1$ and $1 < O_q (G) < G$ and $2 \in \{ p, q \}$, we set $p = 2$.  

Set $C = C_Q (O_p (G))$.  By Theorem \ref{p-groups}, we know that $P$ has exponent $p$, is cyclic, is dihedral, or is generalized quaternion.  By Lemmas \ref{tidy exp p 2}, \ref{Lemma 1.8.}, \ref{Lemma 1.12.}, and \ref{Lemma 1.5} 
$C$ is normal in $G$.  This implies that $C = O_q (G)$.  By our assumption, $C < Q$.  In particular, $G$ does not have a normal $p$-complement.  Hence, if $P$ is a dihedral group, then $p = 2$.  Using Lemma \ref{Lemma 1.7.},
we have $O_2 (G) = Z_2 \times Z_2$ and in view of Lemma \ref{Lemma 1.8.},
$G/C \cong S_4$.  This implies that $q = 3$ and $C = O_3 (G)$.  Observe that $C_P (O_3 (G)) = O_2 (G)$.  Notice that $Q O_2 (G)$ is a normal subgroup of $G$.  Obviously, $P/O_2 (G)$ acts Frobeniusly on $Q/O_3 (G)$, since $G/O_2 (G) O_3 (G) \cong S_3$.  By Lemma \ref{Lemma 1.6.}, we have that $P/O_2 (G)$ acts Frobeniusly on $O_3 (G)$.  It follows that $G/O_2 (G)$ is a Frobenius group with Frobenius kernel $QO_2 (G)/O_2 (G)$ and a Frobenius complement of order $2$.  It is not difficult to see that $Z(G) = 1$.  This yields (3).

Next, suppose $P$ is generalized quaternion, so again $p = 2$.  We are assuming that $O_2 (G) > 1$.  This implies that $O_2 (G)$ is either cyclic or generalized quaternion.  In either case, $O_2 (G)$ has a characteristic subgroup of order $2$.  This implies that there exists $x \in Z(G)$ with $o (x) = 2$.  Since we are assuming that $G$ does not have a normal $2$-complement, we may apply Lemma \ref{Lemma 1.11.} to see that $O_2 (G)$ is the quaternion group of order $8$ and $q = 3$ and $G/O_3 (G)$ is isomorphic to either ${\rm SL}_2 (3)$ or $\widetilde{{\rm GL}_2 (3)}$.  Now, if $G/O_3 (G)$ is isomorphic to ${\rm SL}_2 (3)$, then $O_2 (G)$ will be a Sylow $2$-subgroup of $G$.  This implies that $P = C_P (O_3 (G))$.  By Lemma \ref{center}, we see that $Z_\infty (G)\cap Q = O_3 (G)$.  Because $G/(Z(O_2 (G)) O_3 (G)) \cong A_4$, we deduce that $Z_\infty (G) \cap O_2 (G) = Z(O_2 (G))$.  This proves (4).  

Suppose $G/O_3 (G)$ is isomorphic to $\widetilde{{\rm GL}_2 (3)}$.  Observe that $O_2 (G) = C_P (O_3 (G))$. Notice that $Q O_2 (G)$ is a normal subgroup of $G$.  Obviously, $P/O_2 (G)$ acts Frobeniusly on $Q/O_3 (G)$, since $G/O_2 (G) O_3 (G) \cong S_3$.  By Lemma \ref{Lemma 1.6.}, we have that $P/O_2 (G)$ acts Frobeniusly on $O_3 (G)$.  It follows that $G/O_2 (G)$ is a Frobenius group with Frobenius kernel $QO_2 (G)/O_2 (G)$ and a Frobenius complement of order $2$.  This implies that $Z_\infty (G) \cap Q = 1$.  Since $G/(Z(O_2 (G)) O_3 (G)) \cong S_4$, we deduce that $Z_\infty (G) \cap P = Z(O_2 (G))$.  This proves (5).  

We now assume $P$ is cyclic or has exponent $p$.  By Lemma \ref{Lemma 1.5} or \ref{tidy exp p 2}, 
we see that $QO_p (G)/O_q (G)$ is a Frobenius group with Frobenius kernel $PO_q (G)/O_q (G)$.  Also, $P \cap Z_\infty (G) = 1$.  Observe that $P = C_P (O_q (G))$.  By Lemma \ref{center}, we have $O_q (G) = Z_\infty (G) \cap Q$.  We conclude that $Z_\infty (G) = O_q (G)$.   If $G = O_p (G) Q$, then we obtain (2).   Thus, we may assume that $O_p (G) Q < G$, and this implies that $O_p (G) < P$.  Let $N$ be a normal subgroup so that $O_p (G)O_q (G) < N$ and $N/O_p (G) O_q (G)$ is a chief factor for $G$.  Thus, $O_p (G) O_q (G)$ is either a $p$-group or a $q$-group.  

Suppose first that $N/(O_p (G)O_q (G))$ is a $q$-group.  (Note that we may have $O_q (G) = 1$.)  Since $QO_p (G)/O_q (G)$ is a Frobenius group, we have that $N/O_q (G)$ is a Frobenius group with $O_p (G) O_q (G)$ as its Frobenius kernel.  By the Frattini argument, we have $G = N N_G (N \cap Q) = O_p (G) (N \cap Q) N_G (N \cap Q)  = O_p (G) N_G (N \cap Q)$.  It follows that $P = O_p (G) N_P (N \cap Q)$.  Since $(N \cap Q)/O_q (G)$ acts Frobeniusly on $O_p (G)$, we see that $O_p (G) \cap N_P (N \cap Q) = 1$.  Hence, we can find $1 \ne x \in N_P (N \cap Q)$.  Suppose there exists $y \in N \cap Q$ that is centralized by $x$.  Observe that $1 < Z (P) \cap O_p (G) \le C_G (x)$.  By Lemma \ref{tidy exp p 1} or \ref{tidy cyclic 1}, we know that $C_G (x)$ has a normal $p$-complement.  This implies that $y$ is in a normal subgroup of $C_G (x)$ that is disjoint from $Z(P) \cap O_p (G)$.  It follows that $y$ centralizes $Z(P) \cap O_p (G)$.  Since $(N \cap Q)/O_ q(G)$ is acting Frobeniusly on $O_p (G)$, we deduce that $y \in O_q (G)$.  We now see that $N_P (N \cap Q)$ is acting Frobeniusly on $(N \cap Q)/O_q (G)$.  Since $N/(O_p (G) O_q (G)) \cong (N \cap Q)/O_q (G)$, we see that $NP/(O_p (G) O_q (G))$ is a Frobenius group with Frobenius kernel $N/(O_p (G) O_q (G))$.  Hence, $NP/O_q (G)$ is a $2$-Frobenius group.  Observe that $F(NP) = O_p (G) \times O_q (G)$.  Thus, Lemma \ref{Lemma 1.13.} applies and implies that $p = 2$ and $NP/O_p (G) \cong S_4$.  This however contradicts the fact that a Sylow $p$-subgroup is cyclic or has exponent $p$.  This case cannot occur.

We have that $N/(O_p (G) O_q (G))$ is a $p$-group.  This implies that $O_q (G) > 1$ since $N > O_p (G) O_q (G)$.  Notice that $O_p (G) < P$ and $1 < O_q (G) < Q$ implies that if $2 \in \{ p, q \}$, then $p = 2$.  Observe that $N \cap P$ is not normal in $G$, so $N \cap P$ cannot centralize $O_q (G)$.  Using Lemma \ref{Lemma 1.6.}, we see that $(N \cap P)/O_p (G)$ acts Frobeniusly on $O_q (G)$.  (Since $q \ne 2$, we have that conclusion (3) of Lemma \ref{Lemma 1.6.} does not occur.)  Since $q \ne 2$, we know from Theorem \ref{p-groups} that $Q$ is either cyclic or exponent $q$.  If $Q$ is cyclic, then Lemma \ref{Lemma 1.5} would apply and $O_q (G) = Q$ which is a contradiction to the choice of $p$ and $q$.  Hence $Q$ has exponent $q$.  Now, we can reverse the roles of $p$ and $q$, and the same argument as in the last paragraph applies to yield a contradiction.  This completes this direction.

To see the converse, observe that a nilpotent group where all the Sylow subgroups are tidy is tidy, so we have the result if (1) holds.  If (3), (4), or (5) hold, then $G$ is tidy by Theorem \ref{Lemma 1.27.}.  If (2) holds, then we have that $G$ is tidy by Theorem \ref{Lemma 1.26.} where $N = Z_\infty (G)$ and $H = Q$ is nilpotent. 
\end{proof}

\section{Solvable tidy groups}

In this section, we work to prove some consequences of our characterization of solvable tidy groups.

We now work to prove that the quotients of finite solvable tidy groups are tidy.  We will see that the proof relies on the fact that a finite solvable group is tidy if and only if its Hall $\{p,q \}$-subgroups are tidy for all primes $p$ and $q$ and on the classification of tidy $\{ p, q \}$-groups.  At this time, it is an open question as to whether quotients of finite nonsolvable tidy groups are tidy.

We begin with the observation using Theorem \ref{p-groups} that any quotient of a tidy $p$-group will be tidy where $p$ is a prime.  Next, we use Theorem \ref{Lemma 1.17.} to determine the tidy $\{ p, q \}$-group where $p$ and $q$ are distinct primes.  This will be the key to the next lemma.

\begin{lemma}\label{pq-quotients}
Let $G$ be a tidy $\{p,q\}$-group for distinct primes $p$ and $q$.  If $N$ is a normal subgroup of $G$, then $G/N$ is a tidy group.
\end{lemma}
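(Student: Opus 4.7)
The plan is to apply the classification in Theorem~\ref{Lemma 1.17.} twice, once to $G$ and once to $\bar G = G/N$, which reduces the lemma to a case-by-case check. Sylow subgroups of $\bar G$ are quotients of Sylow subgroups of $G$, and an inspection of Theorem~\ref{p-groups} shows that quotients of tidy $p$-groups are again tidy: cyclic groups and groups of exponent $p$ are closed under quotients, while quotients of dihedral or generalized quaternion $2$-groups are cyclic, dihedral, or the Klein four-group. Thus it suffices to verify that $\bar G$ satisfies one of the conditions (1)--(5) of Theorem~\ref{Lemma 1.17.}.

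In case (1), $G$ is nilpotent and so is $\bar G$, placing $\bar G$ in (1). In case (2), set $Z = Z_\infty(G)$ and let $\bar Z = ZN/N$, $\bar P = PN/N$, $\bar Q = QN/N$, where $P$ and $Q$ are the Sylow $p$- and Sylow $q$-subgroups of $G$. Since $Z \le Q$ is a $q$-group, so is $\bar Z$. The image $NZ/Z$ is normal in the Frobenius group $G/Z$ with kernel $PZ/Z$, so $NZ/Z$ either lies in or contains this kernel. In the former situation, $\bar G / \bar Z \cong G/NZ$ is a Frobenius group with kernel $\bar P$---using the fact that if $C$ acts fixed-point-freely on a nilpotent group $K$ and $L \trianglelefteq K$ is $C$-invariant, then $C$ acts fixed-point-freely on $K/L$ (a direct computation with the commutator map $k \mapsto c(k)k^{-1}$)---so $\bar G$ satisfies (2), unless $\bar P = 1$, in which case $\bar G$ is a $q$-group of case (1). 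In the latter situation $\bar P = 1$ and again $\bar G$ falls in case (1). One still must check that $Z_\infty(\bar G) = \bar Z$ in the Frobenius outcome; but Frobenius groups have trivial hypercenter, forcing $Z_\infty(\bar G) \le \bar Z$, and the reverse inclusion is automatic from $Z \le Z_\infty(G)$.

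For cases (3)--(5), the argument proceeds by explicit enumeration of normal subgroups. In case (3), the image $NO_3(G)/O_3(G)$ is a normal subgroup of $S_4$, so it equals one of $1$, $V_4$, $A_4$, or $S_4$; combined with the $G$-invariance of $N \cap O_3(G)$, this yields only a handful of possibilities for $\bar G$, each of which is readily seen to fall under case (1), case (2), or case (3) with a smaller $O_3$-part. Cases (4) and (5) are handled identically using the normal subgroup lattices of ${\rm SL}_2(3)$ and $\widetilde{{\rm GL}_2(3)}$ respectively. The main obstacle is the bookkeeping in these last three cases: one must verify that the Frobenius action of the Sylow $2$-subgroup (or its quotient modulo $O_2(G)$) on the relevant $3$-complement remains Frobenius modulo $N$, and that the hypercentral data of $\bar G$ matches exactly one of the listed five structures, so that Theorem~\ref{Lemma 1.17.} can be invoked to conclude tidiness.
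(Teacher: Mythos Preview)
Your approach is correct and follows the same overall strategy as the paper: reduce to the classification in Theorem~\ref{Lemma 1.17.}, handle the $p$-group quotients by the observation that quotients of tidy $p$-groups are tidy, and then treat each of cases (1)--(5) separately.

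The execution differs in one respect. In cases (2)--(5) you verify that $\bar G$ itself satisfies one of conditions (1)--(5) of Theorem~\ref{Lemma 1.17.} and then invoke the converse direction, which entails checking the hypercentral data and, in cases (3)--(5), enumerating normal subgroups and re-verifying all structural conditions (e.g.\ that $O_2(\bar G)$ is still a Klein four-group, that the Frobenius action persists, that $Z(\bar G)=1$). The paper instead short-circuits this: in case (2) it appeals directly to Theorem~\ref{Lemma 1.26.} for $\bar G$ with the normal subgroup $Z_\infty(G)N/N$, and in cases (3)--(5) it simply observes that the only quotients which are not $p$- or $q$-groups are $S_3$, $A_4$, or groups satisfying the hypotheses of Theorem~\ref{Lemma 1.27.}, each of which is already known to be tidy. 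Both routes rely on Theorems~\ref{Lemma 1.26.} and~\ref{Lemma 1.27.} (yours indirectly, through the converse of Theorem~\ref{Lemma 1.17.}), but the paper's saves the structural bookkeeping you flag as the main obstacle.
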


\begin{proof}
We know that all subgroups of tidy groups are tidy, so the Sylow $p$- and Sylow $q$-subgroups of $G$ are tidy.  Hence, if $G/N$ is a $p$-group or a $q$-group, then it is a quotient of a tidy $p$-group or $q$-group, and we have seen that quotients of tidy $p$-groups are tidy.  Thus, we may assume that $G/N$ is not a $p$-group or a $q$-group.  We now consider the possibilities from Theorem \ref{Lemma 1.17.}.  If $G$ is nilpotent, then $G/N$ is nilpotent, and the Sylow $p$- and Sylow $q$-subgroups are quotients of tidy groups, so they are tidy.  Thus, $G/N$ is tidy.  Suppose $Z_\infty (G)$ is a $q$-group and $G/Z_\infty (G)$ is a Frobenius group.  Let $F = F(G)$ and observe that $F = P \times Z_\infty (G)$ where $P$ is the Sylow subgroup.  Then $FN/N = PN/N \times Z_\infty (G) N/N$ and $G/(Z_\infty (G) N)$ will be a Frobenius group with Frobenius kernel $FN/N$.  By Theorem \ref{Lemma 1.26.}, we see that $G/N$ is tidy.  Finally, suppose $G$ satisfies (3), (4), or (5) of Theorem \ref{Lemma 1.17.}.  Observe that the possible quotients of $G$ that are not $p$-groups or $q$-groups are $S_3$, $A_4$, or a group that satisfies the hypotheses of Theorem \ref{Lemma 1.27.}.  Certainly, one can use Proposition 2.5 of \cite{ctidynote} to see that $S_3$ and $A_4$ are tidy, and Theorem \ref{Lemma 1.27.} shows that remaining possible quotients are tidy.
\end{proof}

We now prove that quotients of solvable tidy groups are tidy.  


\begin{proof}[Proof of Theorem \ref{quotient}]
	
Let $\pi$ be the set of primes that divide $|G|$, and let $\rho \subseteq \pi$ have size $2$.  Let $H$ be a Hall $\rho$-subgroup of $G$.  Then $HN/N$ is a Hall $\rho$-subgroup of $G/N$.  Since $G$ is tidy, we see that $H$ is tidy.  By Lemma \ref{pq-quotients}, we see that $HN/N$ is tidy.  Thus, for every two element subset $\rho$ of $\pi$, we see that $G/N$ has a tidy Hall $\rho$-subgroup.  By Theorem 1.1 of \cite{pre2},
we see that $G/N$ is tidy.
\end{proof}

We next work to prove that the Fitting height of a tidy solvable group is at most $4$.  We first prove a theorem that classifies the groups that arise modulo the centralizer of $O_p (G)$.  This is the key step in bounding the Fitting height.

\begin{theorem}\label{Lemma 1.14.}
Let $G$ be a solvable tidy group, let $p$ be a prime, let $H$ be a Hall $p$-complement of $G$ and let $C = C_H (O_p (G))$. If $O_p (G) > 1$, then $C$ is a normal subgroup of $G$ and one of the following occurs:
\begin{enumerate}
\item $G/C$ is a $p$-group.
\item $G/C$ is a Frobenius group whose Frobenius kernel is $O_p (G)C/C$.
\item $p = 2$, $O_2 (G)$ is the quaternion group of order $8$, and $G/C$ is isomorphic to ${\rm SL}_2 (3)$ or $\widetilde {{\rm GL}_2 (3)}$.
\item $p = 2$, $O_2 (G)$ is the Klein $4$-group and $G/C$ is isomorphic to $S_4$.
\item The following occur:
\begin{enumerate}
	\item $p = 3$.
	\item $G/C$ is a Frobenius group whose Frobenius kernel is a Sylow $3$-subgroup and whose Frobenius complement has order $2$.
	\item $O_2 (G)$ is either a Klein $4$-group or is a quaternion group of order $8$.
	\item $O_3 (G)$ has index $3$ in a Sylow $3$-subgroup of $G$.
	\item $G/O_3(G)C \cong S_3$.
\end{enumerate} 
\end{enumerate}
\end{theorem}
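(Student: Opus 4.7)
My plan has two main phases: first to establish $C \trianglelefteq G$, and then to classify $G/C$ by case analysis on the isomorphism type of a Sylow $p$-subgroup $P$ of $G$. By Theorem~\ref{p-groups}, $P$ is cyclic, of exponent $p$, dihedral, or generalized quaternion, and in each of these cases the normality of $C$ is delivered by one of Lemma~\ref{Lemma 1.5}, Lemma~\ref{tidy exp p 2}, Lemma~\ref{Lemma 1.8.}, or Lemma~\ref{Lemma 1.12.}, respectively. If $C = H$, then $G/C = PC/C$ is a $p$-group and we obtain conclusion~(1), so from here on I will assume $C < H$.

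For $P$ cyclic, Lemma~\ref{Lemma 1.5} gives that $G/C$ is Frobenius with kernel $PC/C$, and I will observe that this kernel is a normal $p$-subgroup of $G/C$ whose preimage in $G$ must contain $P$ as a (normal) Sylow $p$-subgroup, forcing $P = O_p(G)$ and hence~(2). For $P$ dihedral (so $p = 2$), I will rule out every option for $O_2(G)$ besides $V_4$: if $O_2(G)$ is cyclic of order at least $4$ or dihedral of order at least $8$, then ${\rm Aut}(O_2(G))$ is a $2$-group, and if $|O_2(G)| = 2$ then $O_2(G) \le Z(G)$; each subcase forces $H = C$. Hence $O_2(G) \cong V_4$, and Lemma~\ref{Lemma 1.8.} finishes the case with $G/C \cong S_4$, giving~(4). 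For $P$ generalized quaternion, a parallel reduction forces $O_2(G) \cong Q_8$; then $H/C$ embeds as an odd-order subgroup of ${\rm Aut}(Q_8) \cong S_4$, so $H/C \cong Z_3$. Lemma~\ref{Lemma 1.11.} identifies the Hall $\{2,3\}$-structure, and I will use the observation that for $n \ge 5$ the only normal subgroup of order $8$ in $Q_{2^n}$ is cyclic to conclude $P \in \{Q_8, Q_{16}\}$, which respectively yield $G/C \cong {\rm SL}_2(3)$ or $G/C \cong \widetilde{{\rm GL}_2(3)}$, i.e.~(3).

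For $P$ of exponent $p$, Lemma~\ref{tidy exp p 2} gives that $O_p(G)H/C$ is Frobenius with kernel $O_p(G)C/C$; when $P = O_p(G)$ this is all of $G/C$ and we have~(2). The remaining subcase, where $P$ has exponent $p$ with $P > O_p(G)$, is the main obstacle and targets conclusion~(5). Here my plan is to invoke the classification of tidy $\{p,q\}$-groups (Theorem~\ref{Lemma 1.17.}) on each Hall $\{p,q\}$-subgroup $L = PQ_q$ of $G$ for $q$ dividing $|H/C|$. Since $L$ is tidy and non-nilpotent, it matches one of cases~(2)--(5) of Theorem~\ref{Lemma 1.17.}, and the requirement that $P$ have exponent $p$ with $P > O_p(G)$ eliminates every possibility except a tightly constrained configuration, pinning down $\{p, q\} = \{3, 2\}$, $|H/C| = 2$, and $|P : O_p(G)| = 3$. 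A standard fact about fixed-point-free involutions acting on groups of odd order then forces $P$ elementary abelian with the generator of $H/C$ acting by inversion, yielding the Frobenius structure of $G/C$ with kernel the whole Sylow $3$-subgroup and the isomorphism $G/(O_3(G)C) \cong S_3$. Finally, $O_2(G) \le C$ sits inside a Sylow $2$-subgroup of $G$ with index $2$; tidiness (Theorem~\ref{p-groups}) then restricts $O_2(G)$ to $V_4$ or $Q_8$. The hardest step will be systematically sifting through the Hall $\{p,q\}$-classification to extract the precise structural constraints of~(5).
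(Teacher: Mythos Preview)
Your overall architecture is sound, but the treatment of the cyclic case contains a genuine error. You assert that when $P$ is cyclic, the preimage $PC$ of the Frobenius kernel ``must contain $P$ as a (normal) Sylow $p$-subgroup,'' and from this you conclude $P = O_p(G)$ and hence conclusion~(2). The parenthetical ``normal'' is unjustified: $PC = C_G(O_p(G))$ has $C$ as a normal $p$-complement, but there is no reason $C$ should normalize $P$. In fact $O_p(PC)$ is characteristic in $PC \trianglelefteq G$, so $O_p(PC) \le O_p(G)$; thus $P \trianglelefteq PC$ would already \emph{be} the statement $P = O_p(G)$, and your argument is circular.

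Worse, the conclusion itself is false. Take a tidy $\{2,3\}$-group $G$ of order $72$ lying in case~(3) of Theorem~\ref{Lemma 1.17.} with cyclic Sylow $3$-subgroup $P \cong Z_9$ (such a group exists: $V_4 \rtimes (Z_9 \rtimes Z_2)$ with $Z_9$ acting on $V_4$ through its quotient $Z_3$ and the involution inverting $Z_9$). Here $p = 3$, $O_3(G) = Z_3 < P$, $H$ is a Sylow $2$-subgroup $D_8$, $C = C_H(O_3(G)) = V_4$, and $G/C \cong D_{18}$ is Frobenius with kernel the \emph{full} Sylow $3$-subgroup $PC/C \cong Z_9$, not $O_3(G)C/C$. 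This group lands in conclusion~(5), not~(2). So conclusion~(5) is not confined to the exponent-$p$ case; it can occur for cyclic $P$ as well, precisely when $p = 3$ and the Hall $\{2,3\}$-subgroup falls into case~(3) or~(5) of Theorem~\ref{Lemma 1.17.}. The paper avoids this pitfall by treating the cyclic and exponent-$p$ cases together: for each prime $q$ dividing $|G:C|$ it applies Theorem~\ref{Lemma 1.17.} to a Hall $\{p,q\}$-subgroup, observes that away from $(p,q) = (3,2)$ this forces $Q$ to normalize $P$, and only when every such $Q$ does so can one conclude $P = O_p(G)$. You should merge your cyclic and exponent-$p$ analyses in the same way.
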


\begin{proof} 
We know by Theorem \ref{p-groups} that a Sylow $p$-subgroup of $G$ is one of the following: exponent $p$, cyclic, dihedral, or generalized quaternion.  Thus, applying Lemmas \ref{tidy exp p 2}, \ref{Lemma 1.8.}, 
\ref{Lemma 1.12.} and \ref{Lemma 1.5},
we see that $C$ is normal in $G$. 
%
Suppose first $p = 2$ and that a Sylow $2$-subgroup is a generalized quaternion group.  Since $O_2 (G) > 1$, this implies that $G$ has a central element of order $2$.  We may apply Lemma \ref{Lemma 1.11.} to see that either conclusion (1) or (3) occurs.
%
%
Next, suppose that $p= 2$ and $G$ has a dihedral Sylow $2$-subgroup.  By Lemma \ref{Lemma 1.8.}, 
we see that either conclusion (1) or (4) holds.  

Next, suppose that a Sylow $p$-subgroup is cyclic or has exponent $p$.  Let $q$ be a prime distinct from $p$ that divides $|G:C|$.  If either $p \ne 3$ or $q \ne 2$ when $p = 3$, we see from Theorem \ref{Lemma 1.17.} that $PQC/C$ is either nilpotent or a Frobenius group where $P$ is normalized by $Q$.  Notice that if $PQC/C$ is nilpotent, then $Q$ would centalize $O_p (G)$ and this would imply that $Q \le C$ contradicting $q$ divides $|G:C|$.   Hence, we have that $PQC/C$ is a Frobenius group.  Since all the Sylow subgroups of $G/C$ normalize $P$, we conclude that $P = O_p (G)$.  Notice that if $1 \ne x \in P$ centralizes a nontrivial element of $H/C$, then it will centralize a nontrivial element of prime power order, and this is a contradiction.  Hence, we have $G/C$ is a Frobenius group with Frobenius kernel $P = O_p (G) \cong O_p (G)C/C$.   This yields (2).
	
We are left with case that $p = 3$ and $2$ divides $|G:C|$.  Let $T$ be a Sylow $2$-subgoup of $G$.  If $PTC/C$ is nilpotent or a Frobenius group with Frobenius kernel $P$, then we can use the argument in the previous paragraph again to see that we have (2) again.  This handles conclusions (1) and (2) of Theorem \ref{Lemma 1.17.}.  We now suppose that $PTC/C$ satisfies (3), (4), or (5) from Theorem \ref{Lemma 1.17.}.  In all three of these cases, we see that $|P:O_3 (G)| = 3$.  We have seen that if $q$ is a prime divisor of $|G:C|$ other than $2$ or $3$, then $QC/C$ will act Frobeniusly on $P$.  This would imply that it acts Frobeniusly on $P/O_3 (G)$ which is a contradiction since $|P:O_3 (G)| = 3$.  Hence, no prime other than $2$ or $3$ divides $|G:C|$.  In conclusion (4) of Theorem \ref{Lemma 1.17.}, we see that $G/C$ is a $3$-group, and we have conclusion (1).  In conclusions (3) or (5) of Theorem \ref{Lemma 1.17.}, we obtain conclusion (5).  This proves the theorem.	
\end{proof}

We now are able to bound the Fitting height.  We also bound the derived length of $G/F(G)$.  Since one can find Frobenius kernels of exponent $p$ and arbitrarily large derived length, we are not able to bound the derived length of $G$.  In particular, we prove Theorem \ref{Lemma 1.15.}.


\begin{proof}[Proof of Theorem \ref{Lemma 1.15.}] 
Let $p_1,\dots, p_n$ be the prime divisors of $F(G)$, so that $F(G) = O_{p_1} (G) \times \cdots \times O_{p_n} (G)$. Let $H_i$ be a Hall $p_i$-complement of $G$ for each $i = 1, \dots, n$ and let $C_i = C_{H_i} (O_{p_i} (G))$. By Theorem \ref{Lemma 1.14.}, we know that $G/C_i$ is one of: (1) a $p_i$-group; (2) a Frobenius group with Frobenius complement $O_{p_i}(G)C_i/C_i$; (3) $p_i = 2$ and $G/C_i$ is isomorphic to $S_4$, ${\rm SL}_2 (3)$, or $\widetilde {{\rm GL}_2 (3)}$; or (4) $p_i = 3$, $G/C_i$ is a Frobenius group whose Frobenius kernel is the Sylow $3$-subgroup of $G/C_i$, a Frobenius complement is cyclic of order $2$, and $G/O_3 (G) C_i \cong S_3$. Note that if $G/(O_{p_i} (G) C_i)$ is isomorphic to a Frobenius complement, then it has normal subgroup that is metacyclic and whose quotient is isomorphic to a subgroup of $S_4$. It is not hard to show that $G/(O_{p_i} (G) C_i)$ has Fitting height at most $3$ and derived length at most $4$ in all cases.  If $|G|$ is odd, then $G/(O_{p_i} (G) C_i)$ is either cyclic or metacyclic; so it has Fitting height and derived length at most $2$. 

Take $N = \cap_{ i=1}^n O_{p_i} (G)C_i$.  It follows that $G/N$ has Fitting height at most $3$ and derived length at most $4$ and if $|G|$ is odd, then $G/N$ has Fitting height and derived length at most $2$.  Note that if $n = 1$, then $C_G (O_{p_1} (G)) = C_G (F(G)) \leq F(G)$, and so, $C_1 = 1$. This implies that $N = O_{p_1}(G) C_1 = O_{p_1} (G) = F (G)$.  Suppose that $n \ge 2$. Consider an element $x \in N$. We can write $x = x_1 \cdots x_n x'$ where $x_1, \dots, x_n, x'$ are powers of $x$ and each $x_i$ has $p_i$-power order and $x'$ has $\{ p_1, \dots, p_n\}'$-order. Since $x \in O_{p_i} (G) C_i$, we see that $x_i \in O_{p_i} (G)$ for each $i$. Also, $x' \in C_i$ for each $i$. This implies that $x'$ centralizes $O_{p_i} (G)$ for each $i$. We deduce that $x'$ centralizes $F(G)$, and so, $x' \in C_G (F(G)) \le F (G)$.  Since the order of $x'$ is coprime to $|F(G)|$, we see that $x' = 1$. Now, $x = x_1 \cdots x_n \in O_{p_1} (G)\times \cdots  \times O_{p_n} (G) = F(G)$.  We conclude that $N \le F(G)$. We have now proved the result.
\end{proof}

\end{document}